\theoremstyle{definition}
\theoremstyle{remark}
\numberwithin{equation}{section}
\newtheorem{tm}{Theorem}[section]
\newtheorem{rk}{Remark}[section]
\newtheorem{ap}{Assumption}[section]
\newtheorem{prop}{Proposition}[section]
\newtheorem{lm}{Lemma}[section]
\newtheorem{cor}{Corollary}[section]
\newtheorem{ex}{Example}[section]
\newcommand{\ee}{\mathbb E}
\newcommand{\ff}{\mathbb F}
\newcommand{\ii}{\mathbb I}
\newcommand{\pp}{\mathbb P}
\newcommand{\nn}{\mathbb N}
\newcommand{\rr}{\mathbb R}
\newcommand{\BB}{\mathcal B}
\newcommand{\CC}{\mathcal C}
\newcommand{\LL}{\mathcal L}
\newcommand{\TT}{\mathcal T}
\newcommand{\PP}{\mathcal P}
\newcommand{\OOO}{\mathscr O}
\newcommand{\FFF}{\mathscr F}
\newcommand{\<}{\langle}
\renewcommand{\>}{\rangle}
\allowdisplaybreaks \allowdisplaybreaks[4]
\begin{document}


\title[Numerical Ergodicity and Uniform Estimate of Monotone SPDE]
{Numerical Ergodicity and Uniform Estimate of Monotone SPDEs Driven by Multiplicative Noise}
 
\author{Zhihui Liu}
\address{Department of Mathematics \& National Center for Applied Mathematics Shenzhen (NCAMS) \& Shenzhen International Center for Mathematics, Southern University of Science and Technology, Shenzhen 518055, China}
\curraddr{}
\email{liuzh3@sustech.edu.cn}
\thanks{The author is supported by the National Natural Science Foundation of China, No. 12101296, Basic and Applied Basic Research Foundation of Guangdong Province, No. 2024A1515012348, and Shenzhen Basic Research Special Project (Natural Science Foundation) Basic Research (General Project), No. JCYJ20220530112814033 and No. JCYJ20240813094919026.} 

\subjclass[2010]{Primary 60H35; 60H15, 65L60}

\keywords{monotone stochastic partial differential equation,
stochastic Allen--Cahn equation, 
numerical invariant measure,
numerical ergodicity,
time-independent strong error estimate}

\begin{abstract}
We analyze the long-time behavior of numerical schemes for a class of monotone stochastic partial differential equations (SPDEs) driven by multiplicative noise.    
By deriving several time-independent a priori estimates for the numerical solutions, combined with the ergodic theory of Markov processes, we establish the exponential ergodicity of these schemes with a unique invariant measure, respectively.
Applying these results to the stochastic Allen--Cahn equation indicates that these schemes always have at least one invariant measure, respectively, and converge strongly to the exact solution with sharp time-independent rates.
We also show that these numerical invariant measures are exponentially ergodic and thus give an affirmative answer to a question proposed in (J. Cui, J. Hong, and L. Sun, Stochastic Process. Appl. (2021): 55--93), provided that the interface thickness is not too small. 
\end{abstract}

\maketitle



\section{Introduction}

Recently, a lot of researchers investigated numerical analysis of the following SPDE under the homogeneous Dirichlet boundary condition in a finite time horizon:
\begin{align}\label{see-fg}
\begin{split}
& {\rm d} X(t, \xi)  
=(\Delta X(t, \xi)+f(X(t, \xi))) {\rm d}t
+g(X(t, \xi)) {\rm d}W(t, \xi), \\
& X(t, \xi)=0,\quad (t, \xi) \in \rr_+\times \partial \OOO; \\
& X(0, \xi)=X_0(\xi), \quad \xi \in \OOO,
\end{split}
\end{align}
where the physical domain $\OOO \subset \rr^d$ ($d=1,2,3$) is a bounded open set  with smooth boundary. 
Here, $f$ is assumed to be of monotone type with polynomial growth, $g$ satisfies the usual Lipschitz condition in an infinite-dimensional setting, and $W$ is an infinite-dimensional Wiener process, see Section \ref{sec2.1}.
Eq. \eqref{see-fg} includes the following semiclassical stochastic Allen--Cahn equation, arising from phase transition in materials science by stochastic perturbation,  as a special case:
\begin{align} \label{ac}
{\rm d} X=  \Delta X  {\rm d}t + \alpha^{-2} (X-X^3) {\rm d}t
+ {\rm d}W,
\quad X(0)=X_0,
\end{align}
where $\alpha>0$ is the interface thickness; see, e.g., \cite{BGJK23}, \cite{BJ19}, \cite{BCH19},  \cite{CH19}, \cite{GM09}, \cite{LQ19}, \cite{LQ20}, \cite{LQ21}, \cite{MP18} and references therein. 
 
The numerical behavior in the infinite time horizon, especially the numerical ergodicity of  SPDEs, including Eq. \eqref{see-fg} (and Eq. \eqref{ac}), is a natural and intriguing question.
As a significant asymptotic behavior, the ergodicity characterizes the case of temporal average coinciding with spatial average, which has vital applications in quantum mechanics, fluid dynamics, financial mathematics, and many other fields \cite{DZ96}, \cite{HW19}.
The spatial average, i.e., the mean of a given function for the stationary law, called the invariant measure of diffusion, is also known as the ergodic limit, which is desirable to compute in many practical applications.
Then, one has to investigate a stochastic system over long time intervals, which is one of the main difficulties from the computational perspective.  
Generally, the explicit expression of the invariant measure for a nonlinear infinite-dimensional stochastic system is rarely available.
Therefore, it is usually impossible to precisely compute the ergodic limit for a nonlinear SPDE driven by multiplicative noise; exceptional examples are gradient Langevin systems (driven by additive noise), see, e.g., \cite{BV10}, \cite{LV22}.
For this reason, it motivated and fascinated a lot of investigations in the recent decade for constructing numerical algorithms that can inherit the ergodicity of the original system and approximate the ergodic limit efficiently.

In finite-dimensional case, much progress has been made in the design and analysis of numerical approximations of the desired ergodic limits; see, e.g., \cite{MSH02}, \cite{WL19}, \cite{LMW23} and references therein for numerical ergodicity of stochastic
ordinary differential equations (SODEs).
On the contrary, the construction and analysis of numerical ergodic limits for SPDEs are still in their early stages.
The authors in \cite{Bre14}, \cite{BK17}, \cite{BV16} firstly studied Galerkin-based linear implicit Euler scheme and high order integrator to approximate the invariant measures of a parabolic SPDE driven by additive white noise; see also \cite{CHW17} for the ergodic limit of a spectral Galerkin modified implicit Euler scheme of the damping stochastic Schr\"odinger equation driven by additive trace-class noise.  
Recently, exponential Euler-type schemes of full discretization and tamed semi-discretization were used in \cite{CGW20} and \cite{Bre22}, respectively, to approximate the invariant measure of a stochastic Allen--Cahn type equation driven by additive colored noise.
The authors in \cite{CHS21} investigated the spectral Galerkin drift implicit Euler (DIE) scheme to approximate the invariant measure of the same type of equation and proposed a question of whether the invariant measure of the temporal DIE scheme is unique.   
Almost all of the above literature focuses on the numerical ergodicity of SPDEs driven by additive noise; the numerical ergodicity in the multiplicative noise case is more subtle and challenging. 

Apart from the approximation of the invariant measure, the strong approximation, i.e., in the moments' sense, is also an important issue. 
There exists a general theory of strong error analysis of numerical approximations for Lipschitz SPDEs in a finite time horizon; see, e.g., \cite{ACLW16}, \cite{CHL17}, \cite{JR15} and references therein.
Recently, we \cite{LQ21} give a theory of strong error analysis of numerical approximations for monotone SPDEs in a finite time horizon.
Whether there is a theory of strong error analysis in the infinite time horizon is a natural question.   

The above two questions on long-time behaviors of numerical approximations for Eq. \eqref{see-fg} motivate the present study. 
On the one hand, we aim to establish the exponential ergodicity of the numerical schemes studied in \cite{GM09}, \cite{LQ21}, \cite{MP18} for Eq. \eqref{see-fg}. 
It should be pointed out that there is no further restriction on the time step size for the DIE scheme \eqref{die} in addition to the usual requirement of the implicit scheme (see Theorem \ref{tm-erg-uk}); see \cite{LMW23} for an upper bound requirement on the time step size for numerical ergodicity of monotone SODEs. 
Meanwhile, applying one of our main results, Theorem \ref{tm-erg-uk}, we conclude that the DIE scheme \eqref{die} is unique ergodic (and exponentially mixing) and thus give an affirmative answer to the question proposed in \cite{CHS21} (see Remark \ref{rk-solution}), provided that the monotone coefficient of the drift function is not too large. 
On the other hand, we develop a time-independent strong error analysis theory for these fully discrete schemes, see Theorem \ref{tm-err}, which generalizes the main results of \cite{LQ21} to the infinite time horizon. 

The paper is organized as follows.
In Section \ref{sec2}, we give several necessary uniform estimates and exponential ergodicity of Eq. \eqref{see-fg}.
These uniform estimates are utilized in Section \ref{sec3} to establish the exponential ergodicity of the proposed semi-discrete and fully discrete schemes.
In Section \ref{sec3}, we also derive time-independent strong error estimates for those above fully discrete schemes.
Finally, the main results in Section \ref{sec3} are applied to the stochastic Allen--Cahn equation \eqref{ac} in Section \ref{sec4}.

\section{Uniform A Priori Estimates and Exponential Ergodicity of Monotone SPDEs}
\label{sec2}

In this section, we first present the required preliminaries and main assumptions that will be used throughout the paper.
Then, we derive several necessary uniform a priori estimates of Eq. \eqref{see-fg}.

\subsection{Main Assumptions}
\label{sec2.1}
  
Denote $a \vee b:=\max(a, b)$ and $a \wedge b:=\min(a, b)$ for $a, b \in \rr$.
For $r \in [2, \infty]$ and $\theta=-1$ or $1$, we use the notations $(L^r=L^r(\OOO), \|\cdot\|_{L^r})$ and $(\dot H^\theta=\dot H^\theta(\OOO), \|\cdot\|_\theta)$ to denote the usual Lebesgue and Sobolev interpolation spaces, respectively.
In particular, we denote $H:=L^2$, $V:=\dot H^1$, and $V^*:=\dot H^{-1}$.
The space $H$ is equipped with Borel $\sigma$-algebra $\BB(H)$ and with the inner product and norm given by $\|\cdot\|$ and $\<\cdot, \cdot\>$, respectively.
The norms in $\dot H^1$ and $\dot H^{-1}=(\dot H^1)^*$ and the duality product between them are denoted by $\|\cdot\|_1$, $\|\cdot\|_{-1}$,  and $_1\<\cdot, \cdot\>_{-1}$, respectively. 
We use $\CC_b(H)$ and ${\rm Lip}_b(H)$ to denote the class of bounded, continuous, and Lipschitz continuous, respectively, functions on $H$.  
For $\phi \in {\rm Lip}_b(H)$, define 
$|\phi|_{\rm Lip}:=\sup_{x \neq y} |\phi(x)-\phi(y)|/\|x-y\|$.

Let ${\bf Q}$ be a self-adjoint and positive definite linear operator on $H$. 
Denote $(H_0:=Q^{1/2} H, \|\cdot\|_{H_0}:=\|Q^{-1/2} \cdot \|)$ and denote by
$(\LL_2^0:=HS(H_0; H), \|\cdot\|_{\LL_2^0})$ and $(\LL_2^1:=HS(H_0; \dot H^1), \|\cdot\|_{\LL_2^1})$ the space of Hilbert--Schmidt operators from $H_0$ to $H$ and $\dot H^1$, respectively.     
Let $W$ be an $H$-valued {\bf Q}-Wiener process on a filtered probability space $(\Omega,\FFF, \ff:=(\FFF_t)_{t \ge 0}, \pp)$, which has the Karhunen--Lo\`eve expansion 
$$W(t)  =\sum_{k\in \nn} \sqrt{q_k} g_k \beta_k(t), \quad t \ge 0.$$
Here $\{g_k\}_{k=1}^\infty$ forming an orthonormal basis of $H$ are the eigenvectors of {\bf Q} subject to the eigenvalues $\{q_k\}_{k=1}^\infty$, and $\{\beta_k\}_{k=1}^\infty $ are mutually independent one-dimensional Brownian motions in $(\Omega, \FFF, \ff, \pp)$.
We mainly focus on trace-class noise, i.e., {\bf Q} is a trace-class operator, or equivalently, ${\rm Trace}({\bf Q}):=\sum_{k\in \nn} q_k<\infty$.

Our main conditions on the coefficients of Eq. \eqref{see-fg} are the following two assumptions.

\begin{ap} \label{ap-f}
$f:\rr\rightarrow \rr$ is differentiable and there exist scalars $L_i \in \rr$, $i=1,2,3,4,5$, and $q \ge 1$ such that 
\begin{align} 
& (f(\xi)-f(\eta)) (\xi-\eta) \le L_1 (\xi-\eta)^2,
\quad \xi, \eta \in \rr, \label{f-mon} \\
& f(\xi) \xi \le L_2 |\xi|^2 + L_3,
\quad \xi \in \rr,  \label{f-coe} \\
& |f'(\xi)|\le L_4 |\xi|^{q-1}+L_5,\quad \xi \in \rr. \label{con-f'}
\end{align}
\end{ap}

\begin{ex} \label{rk-ex-f}
A concrete example such that all the conditions in Assumption \ref{ap-f} holds is an odd polynomial (with order $q$ and) with a negative leading coefficient. 
\end{ex}

By the condition \eqref{f-mon} and the differentiability of $f$, it is clear that
\begin{align} \label{f'}
f'(\xi) \le L_1, \quad \xi \in \rr.
\end{align}  
We also note that one can always assume that $L_1 \le L_2$, as one can take $L_2=L_1+\varepsilon$ in \eqref{f-coe} from \eqref{f-mon} and Young inequality; here and in the rest of the paper, $\varepsilon$ denotes an arbitrarily small positive constant which would be different in each appearance. 
 
Define by $F: L^{q+1} \rightarrow L^{(q+1)'}$ the Nemytskii operator associated with $f$:
\begin{align} \label{df-F}
F(x)(\xi):=f(x(\xi)),\quad x \in \dot H^1,\ \xi \in \OOO,
\end{align} 
where $(q+1)'$ denotes the conjugation of $q+1$, i.e., $1/(q+1)'+1/(q+1)=1$.
Then it follows from the monotonicity condition \eqref{f-mon} and the coercivity condition \eqref{f-coe} that the operator $F$ defined in \eqref{df-F} has a continuous extension from $L^{q+1}$ to $L^{(q+1)'}$ and satisfies
\begin{align} 
_{L^{q+1}}\<x-y, F(x)-F(y)\>_{L^{(q+1)'}} \le L_1 \|x-y\|^2,
& \quad  x, y \in L^{q+1}, \label{F-mon} \\
_{L^{q+1}}\<x, F(x)\>_{L^{(q+1)'}} \le L_2 \|x\|^2+L_3,
& \quad  x \in L^{q+1}, \label{F-coe}
\end{align}
where $_{L^{(q+1)'}} \<\cdot, \cdot\>_{L^{q+1}}$ denotes the duality product between $L^{(q+1)'}$ and $L^{q+1}$.

Throughout, we assume that $q \ge 1$ when $d=1,2$ and $q \in [1,3]$ when $d=3$, 
so that the following frequently used Sobolev embeddings hold:
\begin{align} \label{emb} 
\dot H^1 \subset L^{2q} \subset  L^{q+1} \subset H \subset L^{(q+1)'}\subset L^{(2q)'}  \subset \dot H^{-1}.
\end{align} 
In this case, \eqref{F-mon} and \eqref{F-coe} can be expressed as
\begin{align} 
_1\<x-y, F(x)-F(y)\>_{-1}  \le L_1 \|x-y\|^2, & \quad x,y \in \dot H^1, \label{F-mon+}  \\
_1\<x, F(x)\>_{-1} \le L_2 \|x\|^2+L_3, & \quad x \in \dot H^1. \label{F-coe+} 
\end{align}
The inequalities \eqref{F-mon+} and \eqref{F-coe+}, in combination with the Poincar\'e inequality that 
\begin{align} \label{poin}
\|\nabla x\|^2 \ge \lambda_1 \|x\|^2, \quad x \in \dot H^1, 
\end{align}
where $\lambda_1$ denotes the first eigenvalue of the Dirichlet Laplacian operator, denoted by $A$ with domain ${\rm Dom}(A)=H^2 \cap \dot H^1$, implies that 
\begin{align} 
& _1\<x-y, A(x-y)+F(x)-F(y)\>_{-1} \nonumber  \\ 
& \le -(\lambda_1-L_1) \|x-y\|^2, \quad x, y \in \dot H^1, \label{F-mon-0}  
\end{align}
and that
\begin{align} 
_1\<x, A x+F(x)\>_{-1} 
& \le -(\lambda_1-L_2) \|x\|^2 +L_3, \quad x \in \dot H^1. \label{F-coe-0}
\end{align}
Similarly, noting that $L_1 \le L_2$, we have
\begin{align} \label{F-coe-1}
\<x, A x+F(x)\>_1 
& \le -(\lambda_1-L_2) \|x\|_1^2 +L_3, \quad x \in \dot H^2.
\end{align}

Denote by $G: H \rightarrow \LL_2^0$ the Nemytskii operator associated with $g: \rr \rightarrow \rr$:
\begin{align} \label{df-G}
G(x) g_k(\xi):=g(x(\xi)) g_k(\xi), \quad x \in H,~ k \in \nn,~ \xi \in \OOO.
\end{align}
Our primary condition on the diffusion operator $G$ defined in \eqref{df-G} is the following usual Lipschitz continuity and linear growth conditions.

\begin{ap} \label{ap-g}
There exist nonnegative constants $L_6$, $L_7$, and $L_8$ such that 
\begin{align}  
\|G(x)-G(y)\|_{\LL_2^0}^2 \le L_6 \|x-y\|^2, & \quad x, y \in H, \label{g-lip} \\
\|G(x) \|^2_{\LL_2^0} \le L_7 \|x\|^2+L_8, & \quad x \in H. \label{g-gro} 
\end{align}
Moreover, $G(\dot H^1) \subset \LL_2^1$ and there exist nonnegative constants $L_9$ and $L_{10}$ such that
\begin{align} \label{g-gro-1}
\|G(z) \|^2_{\LL_2^1} \le L_9 \|z\|^2_1+L_{10}, \quad z \in \dot H^1.  
\end{align}
\end{ap}

\begin{rk} 
In the additive noise case $G \equiv {\rm Id}$, the identity operator in $H$, the conditions \eqref{g-lip}--\eqref{g-gro-1} reduce to the assumption $\|(-A)^{1/2} {\bf Q}^{1/2}\|_{HS(H;H)}<\infty$ which was imposed in \cite{QW19} and \cite{CHS21}; see \cite[Remark 2.2]{LQ21} for an example of multiplicative noise case.
\end{rk}

With these preliminaries, \eqref{see-fg} is equivalent to the following infinite-dimensional stochastic evolution equation:
\begin{align} \label{see}
\begin{split}
{\rm d}X_t &=(AX_t+F(X_t)) {\rm d}t+G(X_t) {\rm d}W, \quad t \ge 0;  \\
X(0) & =X_0,
\end{split}
\end{align}
where the random initial datum $X_0$ is assumed to vanish on the boundary $\partial \OOO$ of the physical domain throughout the present paper.

\subsection{Uniform Moments' Estimates of The Exact Solution}

The existence of a unique variational solution of Eq. \eqref{see} in $H$ and $\dot H^1$ was shown in \cite{Cer01} or \cite{KR07} and \cite{Liu13}, respectively, under similar conditions as in Assumptions \ref{ap-f}--\ref{ap-g}.
This solution is also the mild solution of \eqref{see} in \cite{LQ21}, where the author also derived that the solution is indeed continuous a.s. and satisfies certain moments' estimates in a finite time horizon.

Taking advantage of the dissipativity conditions \eqref{F-coe-0} and \eqref{F-coe-1}, we can show the following uniform stability estimate both in $H$ and $\dot H^1$ of Eq. \eqref{see}.
Throughout, we assume without loss of generality that the initial datum $X_0$ of Eq. \eqref{see} is a deterministic function; $X_t^x$ denotes the solution of Eq. \eqref{see} starting from $X_0=x$. 
We will frequently use $C$ to denote generic time-independent constants independent of various discrete parameters that would differ in each appearance.

\begin{prop} \label{prop-sta} 
Let $p \ge 2$, $X_0 \in H$, Assumption \ref{ap-f} and conditions \eqref{g-lip}--\eqref{g-gro} hold.
Assume that $L_2+\frac{p-1}2 L_7<\lambda_1$.
Then there exist positive constants $\gamma_1$ and $C_{\gamma_1}$ such that  
\begin{align}\label{sta-u} 
\ee \|X_t\|^p 
& \le e^{-\gamma_1 t}  \|X_0\|^p + C_{\gamma_1}, \quad t \ge 0.
\end{align}  
Moreover, if $X_0 \in \dot H^1$, $L_2+\frac{p-1}2 L_9 < \lambda_1$, 
and condition \eqref{g-gro-1} hold, then there exist positive constants $\gamma_2$ and $C_{\gamma_2}$ such that 
\begin{align}\label{sta-u-1} 
\ee \|X_t\|^p_1 
& \le e^{-\gamma_2 t}  \|X_0\|^p_1 + C_{\gamma_2}, \quad t \ge 0.
\end{align}  
\end{prop}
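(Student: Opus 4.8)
The plan is to prove both estimates by applying the It\^o formula to the functions $x \mapsto \|x\|^p$ and $x \mapsto \|x\|_1^p$ and then exploiting the dissipativity inequalities \eqref{F-coe-0} and \eqref{F-coe-1} together with the linear growth of $G$. I will first do the $H$-estimate \eqref{sta-u}. Write $\|X_t\|^p = (\|X_t\|^2)^{p/2}$ and apply It\^o's formula to $\Phi(x) = \|x\|^p$; since $\Phi'(x) = p\|x\|^{p-2} x$ and the second derivative contributes the trace term, one obtains
\begin{align*}
{\rm d}\|X_t\|^p
&= p\|X_t\|^{p-2}\, {}_1\<X_t, AX_t+F(X_t)\>_{-1}\, {\rm d}t
+ p\|X_t\|^{p-2} \<X_t, G(X_t)\, {\rm d}W\> \\
&\quad + \tfrac{p}{2}\|X_t\|^{p-2}\|G(X_t)\|_{\LL_2^0}^2\, {\rm d}t
+ \tfrac{p(p-2)}{2}\|X_t\|^{p-4}\|G(X_t)^* X_t\|_{H_0}^2\, {\rm d}t.
\end{align*}
Using \eqref{F-coe-0} on the first term, \eqref{g-gro} on the third, and $\|G(X_t)^* X_t\|_{H_0}^2 \le \|X_t\|^2 \|G(X_t)\|_{\LL_2^0}^2$ on the last, the drift is bounded above by
$$ -p(\lambda_1 - L_2)\|X_t\|^p\, {\rm d}t + pL_3 \|X_t\|^{p-2}\, {\rm d}t + \tfrac{p(p-1)}{2}(L_7\|X_t\|^2 + L_8)\|X_t\|^{p-2}\, {\rm d}t. $$
The key coefficient in front of $\|X_t\|^p$ is then $-p(\lambda_1 - L_2 - \tfrac{p-1}{2}L_7)$, which is strictly negative precisely by the hypothesis $L_2 + \tfrac{p-1}{2}L_7 < \lambda_1$. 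The remaining lower-order terms $\|X_t\|^{p-2}$ (and $\|X_t\|^{p-4}\cdot$const if needed) are absorbed via Young's inequality $\|X_t\|^{p-2} \le \varepsilon \|X_t\|^p + C_\varepsilon$, leaving a drift bounded by $-\gamma_1 \|X_t\|^p + C$ for some $\gamma_1 \in (0, p(\lambda_1-L_2-\tfrac{p-1}{2}L_7))$. Taking expectations (the stochastic integral is a martingale after a standard localization argument using the a priori finite-time moment bounds from \cite{LQ21}), one gets $\frac{{\rm d}}{{\rm d}t}\ee\|X_t\|^p \le -\gamma_1 \ee\|X_t\|^p + C$, and Gr\"onwall's inequality yields \eqref{sta-u} with $C_{\gamma_1} = C/\gamma_1$.

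The $\dot H^1$-estimate \eqref{sta-u-1} follows the same pattern, now applying It\^o's formula to $x \mapsto \|x\|_1^p = (\|\nabla x\|^2)^{p/2}$ on the variational triple $\dot H^2 \subset \dot H^1 \subset H$. The drift term is controlled by $p\|X_t\|_1^{p-2}\<X_t, AX_t + F(X_t)\>_1$, to which \eqref{F-coe-1} applies, giving $-p(\lambda_1-L_2)\|X_t\|_1^p + pL_3\|X_t\|_1^{p-2}$; the It\^o correction produces $\tfrac{p(p-1)}{2}\|X_t\|_1^{p-2}\|G(X_t)\|_{\LL_2^1}^2$, which by \eqref{g-gro-1} is $\le \tfrac{p(p-1)}{2}(L_9\|X_t\|_1^2 + L_{10})\|X_t\|_1^{p-2}$. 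The critical negative coefficient becomes $-p(\lambda_1 - L_2 - \tfrac{p-1}{2}L_9)$, strictly negative by the hypothesis $L_2 + \tfrac{p-1}{2}L_9 < \lambda_1$; lower-order terms are again absorbed by Young's inequality, and Gr\"onwall gives \eqref{sta-u-1}.

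The main obstacle is rigor in the It\^o formula: Eq. \eqref{see} has only a variational solution, so the direct application of the finite-dimensional It\^o formula to $\|\cdot\|^p$ and $\|\cdot\|_1^p$ requires justification — either via the It\^o formula for the variational setting (Krylov--Rozovskii / Pardoux, valid under the Gelfand triple structure $\dot H^1 \subset H \subset \dot H^{-1}$ and its analogue at one level higher) or via a Galerkin/Yosida approximation followed by passage to the limit, combined with the localization needed to handle the stochastic integral and to ensure all terms are integrable. The finite-horizon moment estimates already established in \cite{LQ21} supply exactly the a priori integrability needed to run the localization and take expectations, so the argument closes. A secondary point worth a careful line is the estimate $\|G(x)^* x\|_{H_0}^2 \le \|x\|^2\|G(x)\|_{\LL_2^0}^2$ (and its $\dot H^1$ analogue), which ensures the higher-order It\^o correction term carries the same $L_7$ (resp. $L_9$) constant and does not degrade the threshold condition.
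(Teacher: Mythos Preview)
Your proof is correct and follows essentially the same approach as the paper: It\^o's formula applied to $\|X\|^p$ (resp.\ $\|X\|_1^p$), the dissipativity estimates \eqref{F-coe-0}--\eqref{F-coe-1} and growth bounds \eqref{g-gro}, \eqref{g-gro-1}, Young's inequality to absorb lower-order terms, and then an exponential decay argument. The only cosmetic difference is that the paper multiplies by the integrating factor $e^{\gamma_1 t}$ before taking expectations (citing \cite[Theorem 4.2.5]{LR15} for the variational It\^o formula), whereas you derive the differential inequality $\frac{{\rm d}}{{\rm d}t}\ee\|X_t\|^p \le -\gamma_1 \ee\|X_t\|^p + C$ and invoke Gr\"onwall; these are equivalent.
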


\begin{proof}  
Applying It\^o formula (see, e.g., \cite[Theorem 4.2.5]{LR15}) to the functional $\|X\|^p$ leads to
\begin{align} \label{ito-h1}  
{\rm d} \|X\|^p 
& = \Big[ \frac{p(p-2)}2 \|X\|^{p-4}  \sum_{k \in \nn_+} ~ \<X, G(X) {\bf Q}^\frac12 g_k\>^2 \nonumber   \\
& \quad +\frac p2  \|X\|^{p-2} \Big(2 ~ _1\<X, A X+F(X)\>_{-1}
+\|G(X)\|^2_{\LL_2^0} \Big) \Big]  {\rm d}t \nonumber  \\ 
&\quad + p \|X\|^{p-2} \<X, G(X){\rm d}W\>.
\end{align} 
We omit the temporal variable when an integral appears here and after to lighten the notations. 
For $\gamma_1 \in \rr$, it follows from the product rule that 
\begin{align}    \label{eq-exp}
& e^{\gamma_1 t} \|X_t\|^p - \|X_0\|^p \nonumber \\
& = \int_0^t e^{\gamma_1 r} \Big[ \frac{p(p-2)}2 \|X\|^{p-4}  \sum_{k \in \nn_+} \<X, G(X) {\bf Q}^\frac12 g_k\>^2 \nonumber   \\
& \quad + \frac p2 \|X\|^{p-2} (2~_1\<X, A X+F(X)\>_{-1}
+ \|G(X)\|^2_{\LL_2^0} ) 
+ \gamma_1 \|X\|^p \Big]  {\rm d}r \nonumber  \\ 
&\quad + p \int_0^t e^{\gamma_1 r} \|X\|^{p-2} \<X, G(X){\rm d}W\>.
\end{align} 

By the condition \eqref{g-gro} and Young inequality, we have 
\begin{align*}
& \|X\|^{p-4}  \sum_{k \in \nn_+} \<X, G(X) {\bf Q}^\frac12 g_k\>^2 
\le (L_7+\varepsilon) \|X\|^p +C_\varepsilon,   \\
&  \|X\|^{p-2} \|G(X)\|^2_{\LL_2^0} 
\le (L_7+\varepsilon) \|X\|^p +C_\varepsilon, 
\end{align*}
for $X \in H$.   
Combining the above two estimates and \eqref{F-coe-0}, taking expectation on Eq. \eqref{eq-exp}, and noting that $\int_0^t e^{\gamma_1 r} \|X\|^{p-2} \<X, G(X){\rm d}W\>$ is a (real-valued) martingale, we obtain
\begin{align}   \label{est-exp} 
e^{\gamma_1 t} \ee \|X_t\|^p 
& \le \|X_0\|^p - \gamma_1^* \int_0^t e^{\gamma_1 r} \ee \|X\|^p {\rm d}r
+ C_\varepsilon \int_0^t e^{\gamma_1 r}  {\rm d}r,
\end{align}  
with $\gamma_1^*:=p (\lambda_1-L_2-\frac{p-1}2 (L_7+\varepsilon))-\gamma_1$.
As $L_2+\frac{p-1}2 L_7<\lambda_1$, one can take $\varepsilon>0$ and $\gamma_1 >0$ to be  sufficiently small such that $\gamma_1^*>0$. 
Then it follows that 
\begin{align*}    
\ee \|X_t\|^p 
& \le e^{-\gamma_1 t} \|X_0\|^p + C_\varepsilon \int_0^t e^{-\gamma_1 r}  {\rm d}r,
\end{align*}  
from which we obtain \eqref{sta-u}.

Now let $X_0 \in \dot H^1$.
Applying It\^o formula to the functional $e^{\gamma_1 t} \|X_t\|^p_1$ leads to 
\begin{align*}
& e^{\gamma_2 t} \|X_t\|^p_1 -\|X_0\|^p_1 \nonumber  \\ 
& = \int_0^t e^{\gamma_2 r} \Big[ \frac{p(p-2)}2 \|X\|_1^{p-4}  \sum_{k \in \nn_+} \<X, G(X) {\bf Q}^\frac12 g_k\>_1^2 \nonumber   \\
& \quad + \frac p2 \|X\|_1^{p-2} (2 \<X, A X+F(X)\>_1 + \|G(X)\|^2_{\LL_2^1}) 
+ \gamma_2 \|X\|^p_1 \Big]  {\rm d}r \nonumber  \\ 
&\quad + p \int_0^t e^{\gamma_2 r} \|X\|_1^{p-2} \<X, G(X){\rm d}W\>_1.
\end{align*} 
By the conditions \eqref{g-lip} and \eqref{g-gro-1}, we have 
\begin{align*}
& \|X\|_1^{p-4}  \sum_{k \in \nn_+} \<X, G(X) {\bf Q}^\frac12 g_k\>_1^2 
\le (L_9+\varepsilon) \|X\|_1^p +C_\varepsilon, \quad u\in\dot H^1, \\
&  \|X\|_1^{p-2} \|G(X)\|^2_{\LL_2^1} 
\le (L_9+\varepsilon) \|X\|_1^p +C_\varepsilon, \quad u\in\dot H^1.
\end{align*} 
Using techniques similar to those used to establish \eqref{est-exp}, together with the above two estimates, \eqref{F-coe-1}, and the fact that $\int_0^t e^{\gamma_2 r} \|X\|_1^{p-2} \<X, G(X){\rm d}W\>_1$ is a martingale, implies that 
\begin{align*}    
e^{\gamma_2 t} \ee \|X_t\|^p_1 
& \le \|X_0\|^p_1 - \gamma_2^* \int_0^t e^{\gamma_2 r} \ee \|X\|_1^p {\rm d}r
+ C_\varepsilon \int_0^t e^{\gamma_2 r}  {\rm d}r,
\end{align*}  
with $\gamma_2^*:=p (\lambda_1-L_2-\frac{p-1}2 (L_9+\varepsilon))-\gamma_2>0$ for some sufficiently small and positive $\varepsilon$ and $\gamma_2$, as $L_2+\frac{p-1}2 L_9 < \lambda_1$.
Then it follows that 
\begin{align*}    
\ee \|X_t\|_1^p 
& \le e^{-\gamma_2 t} \|X_0\|^p + C_\varepsilon \int_0^t e^{-\gamma_2 (t-r)}  {\rm d}r,
\end{align*}  
from which we conclude \eqref{sta-u-1}. 
\end{proof}

As a by-product of the uniform stability estimate \eqref{sta-u-1}, we have the following uniform temporal H\"older continuity estimate; see \cite[Proposition 5.4]{BIK16} for a similar result in finite-dimensional case.
It will be used in Section \ref{sec4} to derive the time-independent strong error estimates of the proposed implicit numerical schemes.

\begin{cor} 
Let $p \ge 2$, $X_0\in \dot H^1$, and Assumptions \ref{ap-f}--\ref{ap-g} hold. 
Assume that $L_1+\frac{p-1}2 L_9 < \lambda_1$.
Then, there exists a constant $C$ such that 
\begin{align} \label{hol-u} 
\sup_{t >s \ge 0}\frac{\ee \|X_t-X_s\|^p}{(t-s)^{p/2}} 
\le  C (1+\|X_0\|_1^{pq}).
\end{align} 
\end{cor}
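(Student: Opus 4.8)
The plan is to control the increment $X_t - X_s$ using the mild/integral form of Eq.~\eqref{see} on the interval $[s,t]$, writing
\begin{align*}
X_t - X_s = \int_s^t (A X_r + F(X_r))\,{\rm d}r + \int_s^t G(X_r)\,{\rm d}W.
\end{align*}
First I would take the $\dot H^{-1}$--$\dot H^1$ duality pairing and apply the It\^o formula to $\|X_t - X_s\|^p$ in the variable $t$ (with $s$ fixed), exactly as in the proof of Proposition~\ref{prop-sta} but now with the ``initial'' value $X_s - X_s = 0$ at time $t=s$. This produces a drift term of the form $\frac p2 \|X_t-X_s\|^{p-2}\bigl(2\,{}_1\langle X_t - X_s, A(X_t-X_s)\rangle_{-1} + \cdots\bigr)$, but the nonlinearity appears as $F(X_t)$ paired against $X_t - X_s$, which is not of the monotone form $F(X_t)-F(X_s)$. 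So the cleaner route is to keep the drift from the $A$-part and handle $F$ and $G$ by growth bounds: use $2\,{}_1\langle X_t-X_s, A(X_t-X_s)\rangle_{-1} = -2\|\nabla(X_t-X_s)\|^2 \le 0$ to discard the leading elliptic term, and bound the remaining contributions of $F(X_t)$ and $G(X_t)$ directly in terms of $\|X_t - X_s\|$ and the a priori moments of $X_t$.

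The key intermediate estimate is a uniform-in-time moment bound on $\|F(X_t)\|_{-1}$. Using the polynomial growth \eqref{con-f'} one gets $|f(\xi)| \le C(1+|\xi|^q)$, hence $\|F(X_t)\|_{L^{(2q)'}} \le C(1 + \|X_t\|_{L^{2q}}^q) \le C(1 + \|X_t\|_1^q)$ by the Sobolev embedding \eqref{emb}; combined with \eqref{sta-u-1} this yields $\sup_{t\ge 0}\ee\|F(X_t)\|_{-1}^p \le C(1+\|X_0\|_1^{pq})$. Similarly \eqref{g-gro} gives $\sup_{t\ge 0}\ee\|G(X_t)\|_{\LL_2^0}^p \le C(1+\|X_0\|_1^p)$ from \eqref{sta-u-1}. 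With these in hand, I would estimate the three pieces of the increment separately: the $A$-term via the smoothing property $\|\int_s^t A X_r\,{\rm d}r\|_{-1}$-type bounds or, more simply, by pairing and absorbing; the $F$-term by $\|\int_s^t F(X_r)\,{\rm d}r\| \le (t-s)^{?}\bigl(\int_s^t \|F(X_r)\|^{?}\bigr)^{?}$ via H\"older; and the stochastic term by the Burkholder--Davis--Gundy inequality, which gives $\ee\|\int_s^t G(X_r)\,{\rm d}W\|^p \le C_p\,\ee\bigl(\int_s^t \|G(X_r)\|_{\LL_2^0}^2\,{\rm d}r\bigr)^{p/2} \le C_p (t-s)^{p/2}\sup_r \ee\|G(X_r)\|_{\LL_2^0}^p$, which is exactly the desired order $(t-s)^{p/2}$.

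The main obstacle is the $A$-term $\int_s^t A X_r\,{\rm d}r$: since $A$ is unbounded, one cannot bound it in $H$ directly, and the naive estimate $\|\int_s^t A X_r\,{\rm d}r\|_{-1} \le (t-s)\sup_r \|A X_r\|_{-1} = (t-s)\sup_r\|X_r\|_1$ only gives order $(t-s)$, which is worse than $(t-s)^{1/2}$ for small $t-s$ (so it is in fact \emph{better}, not worse — on a bounded increment $(t-s)\le (t-s)^{1/2}$ fails only for $t-s>1$, but \eqref{sta-u-1} lets us cap things uniformly). The genuinely delicate point is that $\|X_t - X_s\|$ is measured in $H$, not $\dot H^{-1}$, so simply bounding $\|A X_r\|_{-1}$ is not enough; one needs the parabolic smoothing of the semigroup $e^{tA}$. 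The clean fix is to use the mild formulation $X_t - X_s = (e^{(t-s)A}-I)X_s + \int_s^t e^{(t-r)A}F(X_r)\,{\rm d}r + \int_s^t e^{(t-r)A}G(X_r)\,{\rm d}W$, bound $\|(e^{(t-s)A}-I)X_s\| \le C(t-s)^{1/2}\|X_s\|_1$ by the standard estimate $\|(e^{\tau A}-I)(-A)^{-1/2}\|_{\mathcal L(H)} \le C\tau^{1/2}$, estimate the deterministic convolution using $\|e^{(t-r)A}\|_{\mathcal L(\dot H^{-1},H)} \le C(t-r)^{-1/2}$ together with the $\dot H^{-1}$-moment bound on $F$, giving order $(t-s)^{1/2}$, and estimate the stochastic convolution by the factorization method or a maximal-inequality argument, again landing at order $(t-s)^{1/2}$. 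Assembling these three bounds, raising to the $p$-th power, taking expectations, and invoking \eqref{sta-u-1} to absorb $\|X_s\|_1$ by $\|X_0\|_1$ yields \eqref{hol-u} with the constant $C(1+\|X_0\|_1^{pq})$, the factor $q$ entering only through the $F$-term.
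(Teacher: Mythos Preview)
Your proposal is correct and follows essentially the same route as the paper. The paper's proof simply invokes \cite[Theorem 3.2]{LQ21}, which already records the bound
\[
\ee \|X_t-X_s\|^p \le C(t-s)^{p/2}\bigl(1+\|X_0\|_1^p+\sup_{r\ge 0}\ee\|F(X_r)\|^p+\sup_{r\ge 0}\ee\|G(X_r)\|_{\LL_2^0}^p\bigr),
\]
and then closes by combining \eqref{con-f'}, \eqref{g-gro}, the embeddings \eqref{emb}, and the uniform $\dot H^1$-moment bound \eqref{sta-u-1}. Your mild-formulation argument (splitting into $(e^{(t-s)A}-I)X_s$, the $F$-convolution, and the stochastic convolution, with BDG for the latter) is precisely the mechanism behind that cited result; the paper just outsources this step rather than redoing it. One cosmetic remark: you do not actually need the $\dot H^{-1}$ smoothing for the $F$-term, since $\|F(X_r)\|\le C(1+\|X_r\|_1^q)$ already lies in $H$ and the plain bound $\|e^{(t-r)A}\|_{\mathcal L(H)}\le 1$ gives order $(t-s)$, which suffices.
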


\begin{proof} 
For any $t >s \ge 0$, it was shown in \cite[Theorem 3.2]{LQ21} that 
\begin{align*}
& \ee \|X_t-X_s\|^p \\
& \le  C(t-s)^{p/2} (1+\|X_0\|_1^p
+ \sup_{t \ge 0} \ee \|F(X_t)\|^p
+ \sup_{t \ge 0} \ee \|G(X_t)\|_{\LL_2^0}^p).
\end{align*} 
This estimate, together with the conditions \eqref{con-f'}, \eqref{g-gro}, the embeddings \eqref{emb}, and the stability estimate \eqref{sta-u-1}, implies the desired estimate \eqref{hol-u}.
\end{proof}

\subsection{Exponential Ergodicity of The Exact Solution}

It is clear that the solution of Eq. \eqref{see} is a Markov process, see, e.g., \cite[Proposition 4.3.5]{LR15}, so we define the family of Markov semigroup $\{P_t\}_{t \ge 0}$ associated with Eq. \eqref{see} as 
\begin{align*}
(P_t \phi) (x):=\ee \phi(X_t^x), \quad x \in H,~ t \ge 0.
\end{align*} 
 
A probability measure $\mu$ on $H$ is called invariant for $\{P_t\}_{t \in \ii}$ with either $\ii=\rr_+$ or $\ii=\nn$, if 
\begin{align*}
\int_H P_t \phi(x) \mu({\rm d}x)=\mu(\phi):=\int_H \phi(x) \mu({\rm d}x),
\quad \phi \in \CC_b(H), \ t \in \ii.
\end{align*} 
An invariant (probability) measure $\mu$ is called ergodic for $\{P_t\}_{t \ge 0}$ (or Eq. \eqref{see}) or $\{P_m\}_{m \in \nn}$ if for all $\phi \in L^2(H; \mu)$, it holds, respectively, that  
\begin{align*}
\lim_{T \to \infty} \frac1T \int_0^T P_t \phi {\rm d}t\Big(\text{or}~\lim_{m \to \infty} \frac1m \sum_{k=0}^m P_k \phi \Big)=\mu(\phi) 
\quad \text{in}~ L^2(H; \mu).
\end{align*}
 
The following unique ergodicity of Eq. \eqref{see}, with an exponential convergence to the equilibrium, was proved using a remote initial condition method in \cite[Theorem 4.3.9]{LR15}.  

\begin{tm}  \label{tm-erg}
Let Assumption \ref{ap-f} and conditions \eqref{g-lip}-\eqref{g-gro} hold.
Assume that $(L_1+L_6/2) \vee (L_2+L_7/2)<\lambda_1$.
Then a unique invariant measure $\pi$  in $H$ exists for Eq. \eqref{see}.
Moreover, for any $t \ge 0$, $x \in H$, and $\phi \in {\rm Lip}_b(H)$, there exist a positive constant $\gamma_3$ such that  
\begin{align} \label{mix}
|P_t \phi(x)-\pi(\phi)| \le  \exp (-\gamma_3 t/2)|\phi|_{\rm Lip} \int_H \|x-y\| \pi ({\rm d} y).
\end{align}   
\end{tm}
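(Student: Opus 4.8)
The plan is to establish \eqref{mix} first --- the exponential contraction estimate --- and then deduce existence and uniqueness of the invariant measure $\pi$ as a consequence, rather than the other way around. The core is a synchronous coupling argument: let $X_t^x$ and $X_t^y$ be two solutions of \eqref{see} driven by the \emph{same} Wiener process $W$ but started from $x,y \in H$. Applying It\^o's formula to $\|X_t^x - X_t^y\|^2$ and using the monotonicity bound \eqref{F-mon-0} for the drift difference together with the Lipschitz condition \eqref{g-lip} for the diffusion difference, one gets
\begin{align*}
{\rm d} \|X_t^x - X_t^y\|^2
&= \big(2\,{}_1\langle X_t^x - X_t^y, A(X_t^x-X_t^y)+F(X_t^x)-F(X_t^y)\rangle_{-1} + \|G(X_t^x)-G(X_t^y)\|_{\LL_2^0}^2\big){\rm d}t \\
&\quad + 2\langle X_t^x - X_t^y, (G(X_t^x)-G(X_t^y)){\rm d}W\rangle \\
&\le -(2(\lambda_1 - L_1) - L_6)\|X_t^x - X_t^y\|^2\,{\rm d}t + 2\langle X_t^x - X_t^y, (G(X_t^x)-G(X_t^y)){\rm d}W\rangle.
\end{align*}
Setting $\gamma_3 := 2(\lambda_1 - L_1) - L_6 > 0$ (which is positive precisely because $L_1 + L_6/2 < \lambda_1$), the stochastic integral is a martingale, so taking expectations and applying Gr\"onwall's inequality yields $\ee\|X_t^x - X_t^y\|^2 \le e^{-\gamma_3 t}\|x-y\|^2$; by Jensen, $\ee\|X_t^x - X_t^y\| \le e^{-\gamma_3 t/2}\|x-y\|$.

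From this the Lipschitz-test-function estimate is immediate: for $\phi \in {\rm Lip}_b(H)$ we have $|P_t\phi(x) - P_t\phi(y)| = |\ee\phi(X_t^x) - \ee\phi(X_t^y)| \le |\phi|_{\rm Lip}\,\ee\|X_t^x - X_t^y\| \le e^{-\gamma_3 t/2}|\phi|_{\rm Lip}\|x-y\|$. This is a contraction of the semigroup in the Kantorovich--Wasserstein-$1$ distance with rate $e^{-\gamma_3 t/2}$. To get existence of an invariant measure one shows the family $\{P_t^*\delta_0\}_{t\ge 0}$ (the laws of $X_t^0$) is tight: the uniform second-moment bound \eqref{sta-u} with $p=2$ gives $\sup_t \ee\|X_t^0\|^2 < \infty$ under $L_2 + L_7/2 < \lambda_1$, and combined with the compactness of the embedding $\dot H^1 \hookrightarrow H$ and the $\dot H^1$-bound \eqref{sta-u-1} (or, if one prefers to avoid assuming $X_0 \in \dot H^1$, a standard smoothing/Krylov--Bogoliubov argument on the Ces\`aro averages $\frac1T\int_0^T P_t^*\delta_0\,{\rm d}t$) yields a weak limit point $\pi$, which is invariant by the Feller property of $\{P_t\}$ (continuity of $x \mapsto P_t\phi(x)$, itself a consequence of the contraction). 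Uniqueness then follows from the contraction: if $\mu,\nu$ are both invariant, then for $\phi \in {\rm Lip}_b(H)$,
\[
|\mu(\phi) - \nu(\phi)| = \Big|\int\!\!\int (P_t\phi(x) - P_t\phi(y))\,\mu({\rm d}x)\nu({\rm d}y)\Big| \le e^{-\gamma_3 t/2}|\phi|_{\rm Lip}\int\!\!\int\|x-y\|\,\mu({\rm d}x)\nu({\rm d}y) \to 0,
\]
so $\mu = \nu$ on ${\rm Lip}_b(H)$, hence on $\CC_b(H)$, hence $\mu = \nu$. Finally \eqref{mix} comes from invariance of $\pi$: $|P_t\phi(x) - \pi(\phi)| = |\int_H (P_t\phi(x) - P_t\phi(y))\pi({\rm d}y)| \le e^{-\gamma_3 t/2}|\phi|_{\rm Lip}\int_H \|x-y\|\,\pi({\rm d}y)$.

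The main obstacle is the existence/tightness step rather than the contraction: the contraction argument is a one-line It\^o computation once \eqref{F-mon-0} and \eqref{g-lip} are in hand, but extracting a genuine invariant probability measure on the infinite-dimensional space $H$ requires either the compact-embedding argument (which needs the $\dot H^1$ estimate and hence a restriction on the initial datum, or a separate smoothing estimate showing $P_t\delta_0$ charges $\dot H^1$ for $t>0$) or a Cauchy argument showing $\{P_t^*\delta_0\}$ is itself Cauchy in the Wasserstein metric on $\mathcal{P}_1(H)$ --- the latter is cleanest here since the contraction estimate directly gives, for $t > s$, that $\mathcal{W}_1(P_t^*\delta_0, P_s^*\delta_0) = \mathcal{W}_1(P_s^*(P_{t-s}^*\delta_0), P_s^*\delta_0) \le e^{-\gamma_3 s/2}\,\mathcal{W}_1(P_{t-s}^*\delta_0, \delta_0) \le e^{-\gamma_3 s/2}\sup_r\ee\|X_r^0\| \to 0$, so completeness of $(\mathcal{P}_1(H), \mathcal{W}_1)$ delivers the limit $\pi$ directly. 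I would present the coupling/contraction computation in full and then invoke the remote-initial-condition / Wasserstein-Cauchy argument as in \cite[Theorem 4.3.9]{LR15}, citing it for the standard parts.
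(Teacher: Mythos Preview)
Your proposal is correct and aligns with the paper's treatment: the paper does not give its own proof of this theorem but simply cites \cite[Theorem 4.3.9]{LR15} (the remote-initial-condition method), which is exactly the contraction/coupling argument you spell out, and you explicitly invoke that reference at the end. Your write-up is in fact more detailed than what appears in the paper; note also that the paper's own proof of the discrete analogue (Theorem \ref{tm-erg-uk}) follows precisely your template of Krylov--Bogoliubov for existence plus the contraction estimate for uniqueness and mixing.
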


\begin{rk}
A Markov semigroup $\{P_t\}_{t \in \ii}$ with $\ii=\rr_+$ or $\ii=\nn$ satisfying an exponential convergence to the equilibrium, like \eqref{mix}, is called exponentially mixing.
It is clear that once $\{P_t\}$ is exponentially mixing for the invariant measure $\mu$, then the following strong law of large numbers (implying the ergodicity) holds:
\begin{align*} 
\lim_{T \to \infty} \frac1T \int_0^T P_t \phi(x) {\rm d}t~
\Big(\text{or}~\lim_{m \to \infty} \frac1m \sum_{k=0}^m P_k \phi(x)\Big)
=\mu(\phi),
\quad \forall~ x \in H, ~\mu\text{-a.s.}
\end{align*}
\end{rk}

\section{Exponential Ergodicity and Uniform Estimates of Galerkin-based Euler Scheme}
\label{sec3}

This section will first establish the exponential ergodicity of the drift-implicit Euler (DIE) semi-discrete and Galerkin-based fully discrete schemes. 
Then, we will derive uniform, strong error estimates of this fully discrete scheme. 
At the end of this section, some remarks on the generalization to the Milstein-type scheme and its Galerkin discretization are given.

\subsection{Exponential Ergodicity of DIE Scheme}

Let $\tau \in (0, 1)$. For an $m \in \nn$, define $t_m:=m \tau$.
We discrete Eq. \eqref{see} in time with a DIE scheme, and the resulting temporal semi-discrete problem is to find an $\dot H^1$-valued Markov chain $\{X_m: \ m \in \nn_+\}$ such that
\begin{align}\label{die} \tag{DIE} 
&X_{m+1}
=X_m+\tau A X_{m+1}
+\tau F(X_{m+1})
+ G(X_m) \delta_m W, 
\end{align}
starting from $X_0 \in H$, where $\delta_m W=W(t_{m+1})-W(t_m)$, $m \in \nn$.
This DIE scheme had been widely studied; see, e.g., \cite{CHS21}, \cite{FLZ17}, \cite{LQ21}, \cite{QW19}, and references therein.
Throughout this part we assume that $\tau \in (0, 1)$ with $(L_1-\frac{\lambda_1}{\lambda_1+1}) \tau<1$ so that \eqref{die} is well-posedness; see \cite[Lemma 4.1]{LL24}.

We begin with the following uniform moments' stability of the above DIE scheme \eqref{die}.
It should be pointed out that there is no further restriction on the time step size $\tau \in (0, 1)$; see \cite[Theorems 4.1 and 4.2]{LMW23} for an upper bound requirement on $\tau$ for the numerical ergodicity of monotone SODEs.

\begin{lm} \label{lm-sta-uk} 
Let $X_0 \in H$, Assumption \ref{ap-f} and conditions \eqref{g-lip}-\eqref{g-gro} hold, and $\{X_m: m \in \nn\}$ be the solution of the DIE scheme \eqref{die} starting from $X_0$.
Assume that $L_2+L_7/2<\lambda_1$.
Then there exist positive constants $\gamma_5$, $C_{\gamma_5}$, and $C$ such that for any $\tau \in (0, 1)$, 
\begin{align} 
& \ee \|X_m\|^2 \le e^{-\gamma_5 t_m} \|X_0\|^2+C_{\gamma_5}, \quad m \in \nn, \label{sta-uk} \\
& \sup_{m \in \nn_+} \frac1m \sum_{k=0}^m \ee \|X_k\|_1^2 \le C( 1 + \|X_0\|^2).  \label{sta-uk+}
\end{align}
\end{lm}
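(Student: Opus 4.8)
The plan is to mimic the continuous-time computation from Proposition \ref{prop-sta}, but at the discrete level, exploiting the implicitness of the drift to absorb the dissipativity. First I would test the scheme \eqref{die} with $X_{m+1}$ in the $H$ inner product. Writing $X_{m+1}-X_m = \tau(AX_{m+1}+F(X_{m+1})) + G(X_m)\delta_m W$ and using the elementary identity $2\langle a-b,a\rangle = \|a\|^2-\|b\|^2+\|a-b\|^2$, one gets
\begin{align*}
\|X_{m+1}\|^2 - \|X_m\|^2 + \|X_{m+1}-X_m\|^2
&= 2\tau\, {}_1\langle X_{m+1}, AX_{m+1}+F(X_{m+1})\rangle_{-1} \\
&\quad + 2\langle X_{m+1}, G(X_m)\delta_m W\rangle.
\end{align*}
The dissipativity bound \eqref{F-coe-0} controls the first term on the right by $-2\tau(\lambda_1-L_2)\|X_{m+1}\|^2 + 2L_3\tau$. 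For the stochastic term I would split $\langle X_{m+1},G(X_m)\delta_m W\rangle = \langle X_m,G(X_m)\delta_m W\rangle + \langle X_{m+1}-X_m,G(X_m)\delta_m W\rangle$; the first piece has zero conditional expectation, and the second is absorbed using Young's inequality against the $\|X_{m+1}-X_m\|^2$ term on the left (this is the standard trick that makes the implicit scheme work without a step-size restriction), leaving a remainder bounded in conditional expectation by $\mathbb{E}[\|G(X_m)\delta_m W\|^2 \mid \mathcal F_{t_m}] = \tau\|G(X_m)\|_{\LL_2^0}^2 \le \tau(L_7\|X_m\|^2+L_8)$ via \eqref{g-gro}.

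Taking conditional expectation and collecting terms yields a recursion of the shape
\begin{align*}
(1 + 2\tau(\lambda_1-L_2-\tfrac{L_7}2-\varepsilon))\,\mathbb E\|X_{m+1}\|^2 \le \mathbb E\|X_m\|^2 + C_\varepsilon \tau,
\end{align*}
and since $L_2+L_7/2<\lambda_1$ one may choose $\varepsilon$ small so that $\rho:=2(\lambda_1-L_2-L_7/2-\varepsilon)>0$; then $\mathbb E\|X_{m+1}\|^2 \le (1+\rho\tau)^{-1}\mathbb E\|X_m\|^2 + C_\varepsilon\tau/(1+\rho\tau)$. Iterating gives $\mathbb E\|X_m\|^2 \le (1+\rho\tau)^{-m}\|X_0\|^2 + C$, and using $(1+\rho\tau)^{-m}\le e^{-\rho\tau m/(1+\rho)}\le e^{-\gamma_5 t_m}$ for a suitable $\gamma_5>0$ (uniform over $\tau\in(0,1)$) delivers \eqref{sta-uk}.

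For \eqref{sta-uk+} the idea is that the $\|X_{m+1}-X_m\|^2$ term I discarded above actually contains the gradient dissipation: from \eqref{F-coe+} and the Poincaré inequality, $2\tau\,{}_1\langle X_{m+1}, AX_{m+1}+F(X_{m+1})\rangle_{-1} \le -2\tau\|\nabla X_{m+1}\|^2 + 2\tau L_2\|X_{m+1}\|^2 + 2\tau L_3$, so I would keep the $-2\tau\|\nabla X_{m+1}\|^2 = -2\tau\|X_{m+1}\|_1^2$ term rather than throw it away. Rearranging the one-step inequality to
\begin{align*}
2\tau\,\mathbb E\|X_{m+1}\|_1^2 \le \mathbb E\|X_m\|^2 - \mathbb E\|X_{m+1}\|^2 + C\tau(1 + \mathbb E\|X_m\|^2),
\end{align*}
summing from $k=0$ to $m$, telescoping, dividing by $2\tau$ and then by $m$, and finally invoking the already-proved uniform bound \eqref{sta-uk} for $\frac1m\sum \mathbb E\|X_k\|^2 \le C(1+\|X_0\|^2)$ gives $\frac1m\sum_{k=0}^m \mathbb E\|X_k\|_1^2 \le \frac{1}{2m\tau}\|X_0\|^2 + C(1+\|X_0\|^2)$; the $\frac{1}{m\tau}=\frac{1}{t_m}$ prefactor is bounded on $m\ge 1$ only if $t_m$ is bounded below, so one treats $m$ small trivially and uses $t_m\ge\tau$ — more carefully, one keeps the full $\frac1{2m\tau}(\|X_0\|^2 - \mathbb E\|X_m\|^2)\le\frac1{2m\tau}\|X_0\|^2$ and notes this is $\le \frac{1}{2\tau}\|X_0\|^2$ only for $m=1$...

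The main obstacle I anticipate is exactly this last point: the naive telescoping produces a $1/(m\tau)$ factor in front of $\|X_0\|^2$, which is large when $m\tau$ is small (few small steps), so one cannot get a clean $m$-independent and $\tau$-independent bound directly from telescoping alone. The fix is to not discard the gradient term in the \emph{first} part: rerun the $H$-estimate keeping a fraction of $-\tau\|X_{m+1}\|_1^2$ on the right, obtaining $\mathbb E\|X_{m+1}\|^2 + c\tau\sum_{?}$ type control, or equivalently to combine the exponential decay \eqref{sta-uk} with the telescoped gradient sum so that the $\|X_0\|^2$ contribution to $\sum_{k=0}^m\mathbb E\|X_k\|_1^2$ is $O(\|X_0\|^2)$ rather than $O(m\|X_0\|^2)$. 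Concretely, from the one-step bound $c\tau\,\mathbb E\|X_{m+1}\|_1^2 \le \mathbb E\|X_m\|^2 - \mathbb E\|X_{m+1}\|^2 + C\tau + C\tau\,\mathbb E\|X_m\|^2$, summation gives $c\tau\sum_{k=1}^{m+1}\mathbb E\|X_k\|_1^2 \le \|X_0\|^2 + C\tau(m+1) + C\tau\sum_{k=0}^m\mathbb E\|X_k\|^2$; dividing by $\tau(m+1)$ and applying \eqref{sta-uk} to the last sum yields $\frac{1}{m+1}\sum_{k=1}^{m+1}\mathbb E\|X_k\|_1^2 \le \frac{\|X_0\|^2}{c\tau(m+1)} + C(1+\|X_0\|^2)$; this is still not quite it, so the honest route is to observe $\frac1{t_{m+1}}\|X_0\|^2$ is problematic precisely when $t_{m+1}<1$, and for those finitely-many-in-$\tau$-units indices one instead bounds $\sum_{k=1}^{m+1}\mathbb E\|X_k\|_1^2$ directly by a discrete Gronwall argument giving $O((1+\|X_0\|^2)/\tau \cdot t_{m+1}) = O((1+\|X_0\|^2)(m+1))$, whence the average is $O(1+\|X_0\|^2)$. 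Getting this interpolation between the short-time regime ($t_m<1$) and the long-time regime ($t_m\ge1$) to produce a single bound with no $\tau$ in the denominator is the delicate bookkeeping step; everything else is a routine discrete analogue of Proposition \ref{prop-sta}.
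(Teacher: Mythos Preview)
Your argument for \eqref{sta-uk} is essentially the paper's, with one cosmetic difference: the paper leaves the $L_7\|X_m\|^2$ term on the right, obtaining
\[
(1+2(\lambda_1-L_2)\tau)\,\ee\|X_{m+1}\|^2 \le (1+L_7\tau)\,\ee\|X_m\|^2 + (2L_3+L_8)\tau,
\]
and then iterates using the ratio $(1+L_7\tau)/(1+2(\lambda_1-L_2)\tau)<1$, whereas you move $L_7$ to the left via an extra Young step. Both give the same decay.

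For \eqref{sta-uk+}, the ``fix'' you eventually describe --- keeping a small fraction $\varepsilon$ of the gradient term while using the remaining $(1-\varepsilon)$-fraction through Poincar\'e to preserve the $H$-contraction --- is exactly what the paper does: it records
\[
(1+2(\lambda_1-L_2-\varepsilon)\tau)\,\ee\|X_{k+1}\|^2 + 2\varepsilon\tau\,\ee\|\nabla X_{k+1}\|^2 \le (1+L_7\tau)\,\ee\|X_k\|^2 + (2L_3+L_8)\tau
\]
and then iterates/telescopes. So your route and the paper's coincide.

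Your concern about the residual factor $\|X_0\|^2/(m\tau)=\|X_0\|^2/t_m$ is legitimate, and you should stop trying to argue it away: the paper's displayed iteration also yields only $\tau\sum_{j\le m}\ee\|\nabla X_j\|^2 \le C(1+\|X_0\|^2)$, hence $\tfrac1m\sum_{j\le m}\ee\|X_j\|_1^2 \le C(1+\|X_0\|^2)/t_m$, and the passage to a $t_m$-free constant is not justified there either. Indeed the uniform-in-$\tau$ statement cannot hold for general $X_0\in H$: already for the linear heat scheme ($F=G=0$) one has $\sup_{\|X_0\|=1}\|X_1\|_1^2\asymp 1/\tau$, so $m=1$ breaks the claimed bound; and for $X_0\in H\setminus\dot H^1$ the $k=0$ summand is infinite. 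The point is that \eqref{sta-uk+} is invoked in the paper only with $X_0=0$ (in the Krylov--Bogoliubov tightness argument for Theorem \ref{tm-erg-uk}), where the offending term vanishes. The honest thing to do is to prove and state the estimate either for $X_0=0$, or in the form $\tau\sum_{k=1}^m\ee\|X_k\|_1^2\le C(1+\|X_0\|^2)$, which is what both your computation and the paper's actually deliver and which suffices for the application.
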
 
 
\begin{proof} 
For simplicity, set $F_k=F(X_k)$ and $G_k=G(X_k)$ for $k \in \nn$.
Testing \eqref{die} using $\<\cdot, \cdot\>$-inner product with $X_{k+1}$ and using the elementary equality 
\begin{align} \label{ab}
2 \<x-y, x\> =\|x \|^2- \|y\|^2+\|x-y\|^2,
\quad x, y \in H,
\end{align}  
we have  
\begin{align} \label{sta-uk0}
& \|X_{k+1}\|^2 - \|X_k\|^2 +  \|X_{k+1}-X_k\|^2 \nonumber \\
& = 2  ~ _1\<X_{k+1}, \Delta X_{k+1}+ F_{k+1}\>_{-1} \tau\nonumber \\
& \quad + 2 \<X_{k+1}-X_k, G_k \delta_k W\>
+ 2 \<X_k, G_k \delta_k W\>.
\end{align}   
By the estimate \eqref{F-coe-0} and Cauchy--Schwarz inequality, we have
\begin{align*} 
&  \|X_{k+1}\|^2 - \|X_k\|^2  \\
& \le - 2 (\lambda_1-L_2) \|X_{k+1}\|^2 \tau 
+ \|G_k \delta_k W\|^2 + 2\<X_k, G_k \delta_k W\>+ 2 L_3 \tau. 
\end{align*}  
It follows that
\begin{align*}  
& (1+2 (\lambda_1-L_2) \tau) \|X_{k+1}\|^2 
\le \|X_k\|^2+\|G_k \delta_k W\|^2 
+ 2\<X_k, G_k \delta_k W\>+2 L_3 \tau.
\end{align*}  

Taking expectations on both sides, noting the fact that both $G_k$ and $X_k$ are independent of  $\delta_k W$, using It\^o isometry and \eqref{g-gro}, we get
\begin{align*}  
& (1+2 (\lambda_1-L_2) \tau) \ee \|X_{k+1}\|^2 
\le (1+L_7 \tau)  \ee \|X_k\|^2 + (2 L_3+L_8) \tau,
\end{align*} 
from which we obtain
\begin{align*}  
\ee \|X_k\|^2 
& \le \Big(\frac{1+L_7 \tau}{1+2 (\lambda_1-L_2) \tau}\Big)^k \|X_0\|^2 \\
& \quad + \frac{(2 L_3+L_8) \tau}{1+2 (\lambda_1-L_2) \tau} \sum_{j=0}^{k-1} \Big(\frac{1+L_7 \tau}{1+2 (\lambda_1-L_2) \tau}\Big)^j, 
\quad k \in \nn_+.
\end{align*}   
Note that 
\begin{align} \label{e}
a^k < e^{-(1-a) k}, \quad \forall~ a \in (0, 1), 
\end{align}  
and that $\frac{1+L_7 \tau}{1+2 (\lambda_1-L_2) \tau}<1$ for any $\tau \in (0, 1)$, which is ensured by the condition $L_2+L_7/2<\lambda_1$, we conclude \eqref{sta-uk} with 
$\gamma_5:=\frac{2 (\lambda_1-L_2)-L_7}{1+2 (\lambda_1-L_2) \tau}$ and 
$C_{\gamma_5}:=\frac{2 L_3+L_8}{2 (\lambda_1-L_2)-L_7}$.

Subtracting $2 \varepsilon  ~ _1\<X_{k+1}, \Delta X_{k+1}\>_{-1} \tau$, with small $\varepsilon>0$, from the first term on the right-hand side in Eq. \eqref{sta-uk0}, and using the arguments in the above, we obtain 
\begin{align*}  
& (1+2 (\lambda_1-L_2-\varepsilon) \tau) \ee \|X_{k+1}\|^2 
+ 2 \varepsilon \ee \|\nabla X_{k+1}\|^2 \tau \\
& \le (1+L_7 \tau)  \ee \|X_k\|^2 + (2 L_3+L_8) \tau,
\end{align*} 
from which we derive 
\begin{align*}  
& \ee \|X_k\|^2 + \frac{2 \varepsilon}{1+2 (\lambda_1-L_2-\varepsilon) \tau} \sum_{j=0}^k \ee \|\nabla X_j\|^2 \tau \\
& \le \Big(\frac{1+L_7 \tau}{1+2 (\lambda_1-L_2-\varepsilon) \tau}\Big)^k \|X_0\|^2 \\
& \quad + \frac{(2 L_3+L_8) \tau}{1+2 (\lambda_1-L_2) \tau} \sum_{j=0}^{k-1} \Big(\frac{1+L_7 \tau}{1+2 (\lambda_1-L_2) \tau}\Big)^j, 
\quad k \in \nn_+.
\end{align*}   
Then we conclude \eqref{sta-uk+} by the above estimate, the elementary estimate \eqref{e}, and \eqref{sta-uk}. 
\end{proof}

In the discrete case, there is no It\^o formula anymore. 
Whether $\{e^{\gamma t_m} \|X_m^x-X_m^y\|^2: m \in \nn\}$, for some $\gamma$, is discrete supermartingale is unknown, where $\{X_m^x: m \in \nn\}$ and $\{X_m^y: m \in \nn\}$ are denoted by the solution of the DIE scheme \eqref{die} starting from $x$ and $y$, respectively.
Fortunately, using the monotonicity conditions in Assumptions \ref{ap-f}-\ref{ap-g}, one can show the following discrete uniform continuity dependence estimate similar to the finite-dimensional case in \cite[Theorem 7.6]{LMY19} or \cite[Lemma 4.2]{LMW23}.

\begin{lm} \label{lm-con-uk}  
Let $x, y \in H$, Assumption \ref{ap-f} and conditions \eqref{g-lip}-\eqref{g-gro} hold.
Assume that $L_1+L_6/2<\lambda_1$. 
Then there exist a positive constant $\gamma_6$ such that for any $\tau \in (0, 1)$,  
\begin{align} \label{con-uk}
\ee \|X_m^x-X_m^y\|^2 \le e^{-\gamma_6 t_m} \|x-y\|^2,
\quad m \in \nn.
\end{align} 
Consequently, $\{P_m\}_{m \in \nn_+}$ is Feller, i.e., if $\phi \in \CC_b(H)$, then $P_m \phi \in \CC_b(H)$ for all $m \in \nn_+$ (or equivalently, $P_1 \phi \in \CC_b(H)$).  
\end{lm}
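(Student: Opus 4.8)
The plan is to mimic the continuous‑time argument from Proposition~\ref{prop-sta} and Theorem~\ref{tm-erg}, replacing the It\^o formula by the discrete identity \eqref{ab}. First I would fix $x,y\in H$, write $e_k:=X_k^x-X_k^y$, $\Delta F_k:=F(X_k^x)-F(X_k^y)$, and $\Delta G_k:=G(X_k^x)-G(X_k^y)$, and subtract the two copies of \eqref{die} to get
\begin{align*}
e_{k+1}=e_k+\tau A e_{k+1}+\tau \Delta F_{k+1}+\Delta G_k\,\delta_k W.
\end{align*}
Testing with $e_{k+1}$ via $\<\cdot,\cdot\>$ and using \eqref{ab} yields
\begin{align*}
\|e_{k+1}\|^2-\|e_k\|^2+\|e_{k+1}-e_k\|^2
&=2\tau\, {}_1\<e_{k+1},Ae_{k+1}+\Delta F_{k+1}\>_{-1}\\
&\quad+2\<e_{k+1}-e_k,\Delta G_k\,\delta_k W\>+2\<e_k,\Delta G_k\,\delta_k W\>.
\end{align*}

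Next I would invoke the one‑sided Lipschitz/monotonicity bound: by \eqref{F-mon-0}, ${}_1\<e_{k+1},Ae_{k+1}+\Delta F_{k+1}\>_{-1}\le -(\lambda_1-L_1)\|e_{k+1}\|^2$. For the stochastic terms I would bound $2\<e_{k+1}-e_k,\Delta G_k\,\delta_k W\>\le \|e_{k+1}-e_k\|^2+\|\Delta G_k\,\delta_k W\|^2$, absorbing $\|e_{k+1}-e_k\|^2$ into the corresponding term on the left, and keep $2\<e_k,\Delta G_k\,\delta_k W\>$ for later. Taking expectations and using that $e_k,\Delta G_k$ are $\FFF_{t_k}$‑measurable hence independent of $\delta_k W$, the cross term vanishes and It\^o isometry together with \eqref{g-lip} gives $\ee\|\Delta G_k\,\delta_k W\|^2\le L_6\tau\,\ee\|e_k\|^2$. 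This leads to
\begin{align*}
(1+2(\lambda_1-L_1)\tau)\,\ee\|e_{k+1}\|^2\le (1+L_6\tau)\,\ee\|e_k\|^2,
\end{align*}
so $\ee\|e_k\|^2\le a^k\|x-y\|^2$ with $a:=\frac{1+L_6\tau}{1+2(\lambda_1-L_1)\tau}$. The hypothesis $L_1+L_6/2<\lambda_1$ ensures $2(\lambda_1-L_1)-L_6>0$, whence $a\in(0,1)$ for every $\tau\in(0,1)$; applying the elementary inequality \eqref{e} gives \eqref{con-uk} with $\gamma_6:=\frac{2(\lambda_1-L_1)-L_6}{1+2(\lambda_1-L_1)\tau}$ (or any fixed positive constant below this, uniform in $\tau$).

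For the Feller consequence, I would argue that for $\phi\in\CC_b(H)$ and $x_n\to x$ in $H$, the bound $\ee\|X_1^{x_n}-X_1^x\|^2\le e^{-\gamma_6\tau}\|x_n-x\|^2\to 0$ shows $X_1^{x_n}\to X_1^x$ in $L^2(\Omega;H)$, hence (along a subsequence, or directly) in probability; bounded convergence then gives $P_1\phi(x_n)=\ee\phi(X_1^{x_n})\to\ee\phi(X_1^x)=P_1\phi(x)$, so $P_1\phi\in\CC_b(H)$, and the general case follows by the semigroup property $P_m=P_1^m$. I do not anticipate a serious obstacle here; the one delicate point is making sure the absorption of $\|e_{k+1}-e_k\|^2$ and the independence structure are handled cleanly, and that the resulting rate $\gamma_6$ can be taken independent of $\tau\in(0,1)$, which is immediate since $2(\lambda_1-L_1)\tau<2(\lambda_1-L_1)$.
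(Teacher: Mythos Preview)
Your proposal is correct and matches the paper's proof essentially line for line: the paper also subtracts the two copies of \eqref{die}, tests with $E_{k+1}:=X_{k+1}^x-X_{k+1}^y$ using \eqref{ab}, applies \eqref{F-mon-0}, absorbs $\|E_{k+1}-E_k\|^2$ via Cauchy--Schwarz, takes expectations (killing the martingale term and invoking It\^o isometry with \eqref{g-lip}), and iterates the resulting one-step contraction $(1+2(\lambda_1-L_1)\tau)\,\ee\|E_{k+1}\|^2\le (1+L_6\tau)\,\ee\|E_k\|^2$ together with \eqref{e}. The only cosmetic difference is that the paper directly records the $\tau$-uniform rate $\gamma_6=\frac{2(\lambda_1-L_1)-L_6}{1+2(\lambda_1-L_1)}$, which is precisely the lower bound you identify; your explicit Feller argument is also fine (the paper leaves this step implicit).
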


\begin{proof} 
For $k \in \nn$, set $F_k^x=F(X_k^x)$, $F_k^y=F(X_k^y)$, $G_k^x=G(X_k^x)$, $G_k^y=G(X_k^y)$, and $E_k=X_k^x-X_k^y$. 
Using the same technique in the proof of Lemma \ref{lm-sta-uk} to derive \eqref{sta-uk0}, we have  
\begin{align*} 
& \|E_{k+1}\|^2 - \|E_k\|^2 +  \|E_{k+1}-E_k\|^2  \\
& = 2  ~ _1\<E_{k+1}, \Delta E_{k+1}+ F^x_{k+1}-F^y_{k+1}\>_{-1} \tau \\
& \quad + 2 \<E_{k+1}-E_k, (G^x_k-G^y_k) \delta_k W\>
+ 2 \<E_k, (G^x_k-G^y_k) \delta_k W\>.
\end{align*}   
It follows from the estimate \eqref{F-mon-0} and Cauchy--Schwarz inequality that 
\begin{align*} 
&  \|E_{k+1}\|^2 - \|E_k\|^2  \\ 
&\le - 2 (\lambda_1-L_1) \|E_{k+1}\|^2 \tau 
+ \|(G^x_k-G^y_k) \delta_k W\|^2 \\
& \quad + 2\<E_k, (G^x_k-G^y_k) \delta_k W\>. 
\end{align*}   
Taking expectations on both sides, noting the fact that both $G^x_k-G^y_k$ and $X_k$ are independent of  $\delta_k W$, using It\^o isometry and \eqref{g-lip}, we obtain
\begin{align*}  
& (1+2 (\lambda_1-L_1) \tau) \ee \|E_{k+1}\|^2
\le (1+L_6 \tau) \ee \|E_k\|^2,
\end{align*}   
from which we obtain
\begin{align*}  
\ee \|E_k\|^2 
\le \Big(\frac{1+L_6 \tau}{1+2 (\lambda_1-L_1) \tau}\Big)^k \|x-y\|^2, 
\quad k \in \nn.
\end{align*}    
Then we conclude \eqref{con-uk} with 
$\gamma_6:=\frac{2 (\lambda_1-L_1)-L_6}{2 (\lambda_1-L_1)+1}$ by the above estimate and \eqref{e}.
\end{proof}

We can show that the DIE scheme \eqref{die} is ergodic with a unique invariant measure.
Moreover, it converges exponentially to the equilibrium.

\begin{tm} \label{tm-erg-uk} 
Let Assumption \ref{ap-f} and conditions \eqref{g-lip}-\eqref{g-gro} hold.
Assume that $(L_1+L_6/2) \vee (L_2+L_7/2)<\lambda_1$.
Then for any $\tau \in (0, 1)$, the DIE scheme \eqref{die} possesses a unique invariant measure $\pi_\tau$ in $H$ which is exponentially mixing: 
\begin{align} \label{mix-uk}
|P_m \phi(x)-\pi_\tau(\phi)| \le \exp(-\gamma_6 t_m/2)|\phi|_{\rm Lip} \int_H \|x-y\| \pi_\tau ({\rm d} y),
\end{align}  
for any $m \in \nn$, $x \in H$, and $\phi \in {\rm Lip}_b(H)$, where $\gamma_6$ appeared at \eqref{con-uk}.
\end{tm}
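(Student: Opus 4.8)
The plan is to obtain existence of an invariant measure from the uniform moment bound, uniqueness and the mixing estimate from the contraction in Lemma~\ref{lm-con-uk}, following the classical Krylov--Bogoliubov plus coupling argument adapted to the discrete-time Markov chain $\{P_m\}_{m\in\nn}$. First I would verify the Feller property: this is already recorded at the end of Lemma~\ref{lm-con-uk}, since \eqref{con-uk} gives $\|P_1\phi(x)-P_1\phi(y)\|\le |\phi|_{\rm Lip}\,(\ee\|X_1^x-X_1^y\|^2)^{1/2}\le |\phi|_{\rm Lip}\|x-y\|$, so $P_1$ (hence every $P_m$) maps $\CC_b(H)$ into itself. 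Next, for existence, fix $x=X_0\in H$ and consider the sequence of Cesàro averages $\nu_M:=\frac1M\sum_{k=0}^{M-1}\delta_x P_k$ on $H$. The uniform stability estimate \eqref{sta-uk} from Lemma~\ref{lm-sta-uk} gives $\sup_{k}\ee\|X_k^x\|^2\le \|x\|^2+C_{\gamma_5}<\infty$, so $\sup_M \int_H\|y\|^2\,\nu_M({\rm d}y)<\infty$; since bounded balls of $H$ are not compact I would instead use \eqref{sta-uk+}, which controls $\frac1M\sum_{k=0}^{M-1}\ee\|X_k^x\|_1^2$ and hence gives tightness of $\{\nu_M\}$ via the compact embedding $\dot H^1\hookrightarrow H$. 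Any weak subsequential limit $\pi_\tau$ is then invariant for $P_1$ by the standard Krylov--Bogoliubov argument (using Feller continuity of $P_1$ to pass to the limit in $\nu_M P_1-\nu_M\to 0$), and invariance for $P_1$ is invariance for the chain.

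For uniqueness and the mixing bound \eqref{mix-uk}, I would argue directly from \eqref{con-uk}. Let $\pi_\tau$ be any invariant measure; for $\phi\in{\rm Lip}_b(H)$, $x\in H$, and $m\in\nn$, write
\begin{align*}
|P_m\phi(x)-\pi_\tau(\phi)|
&=\Big|\int_H\big(P_m\phi(x)-P_m\phi(y)\big)\,\pi_\tau({\rm d}y)\Big|
\le \int_H \ee\big|\phi(X_m^x)-\phi(X_m^y)\big|\,\pi_\tau({\rm d}y)\\
&\le |\phi|_{\rm Lip}\int_H \big(\ee\|X_m^x-X_m^y\|^2\big)^{1/2}\,\pi_\tau({\rm d}y)
\le |\phi|_{\rm Lip}\,e^{-\gamma_6 t_m/2}\int_H\|x-y\|\,\pi_\tau({\rm d}y),
\end{align*}
where the first equality uses invariance, $\int_H P_m\phi(y)\,\pi_\tau({\rm d}y)=\pi_\tau(\phi)$, and the last step is Cauchy--Schwarz (or simply Jensen, $(\ee\|\cdot\|^2)^{1/2}\ge\ee\|\cdot\|$ the wrong way, so Cauchy--Schwarz inside is cleaner) together with Lemma~\ref{lm-con-uk}. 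This is exactly \eqref{mix-uk}. Uniqueness follows: if $\pi_\tau,\pi_\tau'$ are both invariant, then for every $\phi\in{\rm Lip}_b(H)$ and every $x$, letting $m\to\infty$ in the above gives $P_m\phi(x)\to\pi_\tau(\phi)$ and $P_m\phi(x)\to\pi_\tau'(\phi)$, so $\pi_\tau(\phi)=\pi_\tau'(\phi)$; since ${\rm Lip}_b(H)$ is measure-determining on $H$, $\pi_\tau=\pi_\tau'$. One should also check the integral on the right of \eqref{mix-uk} is finite, which again follows from $\int_H\|y\|^2\,\pi_\tau({\rm d}y)<\infty$; this second-moment bound for $\pi_\tau$ comes from applying \eqref{sta-uk} with $X_0\sim\pi_\tau$, stationarity, and Fatou's lemma.

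The main obstacle is the tightness step for existence, since the natural a priori bound from \eqref{sta-uk} lives in $H$ and closed balls of the infinite-dimensional space $H$ are not compact. The resolution is precisely the stronger averaged bound \eqref{sta-uk+} in $\dot H^1$: the set $\{y\in H:\|y\|_1\le R\}$ is relatively compact in $H$ by Rellich's theorem, so Chebyshev's inequality applied to $\nu_M$ using $\sup_M\int_H\|y\|_1^2\,\nu_M({\rm d}y)\le C(1+\|x\|^2)$ yields uniform tightness of $\{\nu_M\}_{M\ge1}$. Everything else is routine: Feller continuity is handed to us by Lemma~\ref{lm-con-uk}, the contraction \eqref{con-uk} does all the work for uniqueness and exponential mixing, and the passage $\delta_x P_k\to$ invariant measure is the textbook Krylov--Bogoliubov / Prokhorov argument. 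I would also remark (as the authors do elsewhere) that exponential mixing immediately implies ergodicity of $\pi_\tau$ for $\{P_m\}_{m\in\nn}$ via the law of large numbers, so no separate ergodicity argument is needed.
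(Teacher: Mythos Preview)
Your proposal is correct and follows essentially the same route as the paper: existence via Krylov--Bogoliubov using the averaged $\dot H^1$-bound \eqref{sta-uk+} and the compact embedding $\dot H^1\hookrightarrow H$ for tightness, and uniqueness together with the mixing estimate \eqref{mix-uk} from the contraction \eqref{con-uk} applied inside $\int_H(P_m\phi(x)-P_m\phi(y))\,\pi_\tau({\rm d}y)$. The only cosmetic differences are that the paper starts the Ces\`aro averages from $X_0=0$ rather than a general $x$, and that your parenthetical about Jensen is unnecessarily hesitant---$(\ee\|\cdot\|^2)^{1/2}\ge \ee\|\cdot\|$ is exactly the direction you need.
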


\begin{proof}
Define a sequence $\{\mu^\tau_m\}_{m \in \nn_+}$ of probability measures by \begin{align*}
\mu^\tau_m(B):= \frac1m \sum_{k=0}^m P_k \chi_B(0) dt, \quad B \in \BB(H), ~m \in \nn_+. 
\end{align*}   
By Chebyshev inequality and the temporal average estimate \eqref{sta-uk+} (with $X_0=0$), 
for any $\varepsilon>0$, there exists a positive scalar $I_\varepsilon$ and a compact set $B=\{x \in H:~ \|x\|_1 \le I_\varepsilon\} \subset \BB(H)$ such that 
\begin{align*}  
\mu^\tau_m(B^c)
& =\frac1m \sum_{k=0}^m \pp(\|X_k^0\|_1>I_\varepsilon) dt  
\le \frac{I^{-2}_\varepsilon} m \sum_{k=0}^m \ee \|X_k^0\|_1^2 
\le C I^{-2}_\varepsilon<\varepsilon,
\end{align*}
from which we have that $\{\mu^\tau_m\}_{m \in \nn_+}$ is tight.
Then we conclude the existence of an invariant measure $\pi_\tau$ of $\{P_m\}_{m \in \nn}$ with $\pi_\tau(\|\cdot\|^2)<\infty$, via Krylov--Bogoliubov theorem (see, e.g., \cite[Theorem 1.2]{HW19}), by the tightness of $\{\mu^\tau_m\}_{m \in \nn_+}$ and the Feller property of $\{P_m\}_{m \in \nn}$ in Lemma \ref{lm-con-uk}.

Now let $\mu$ be any invariant measure of $\{P_m\}_{m \ge 0}$.
By the property of invariant measure and continuous dependence estimate \eqref{con-uk}, 
\begin{align*}
|P_m \phi(x)-\mu(\phi)| 
& =\Big| \int_H (P_m \phi(x)-P_m \phi(y)) \mu ({\rm d}y) \Big| \\
& \le  |\phi|_{\rm Lip} \int_H \ee \|X_t^x-X_t^y\| \mu ({\rm d}y) \\
& \le  \exp (-\gamma_6 t_m/2)|\phi|_{\rm Lip} \int_H \|x-y\| \mu({\rm d}y),
\end{align*} 
for any $x \in H$ and $m \ge 0$. 
Taking $m \to \infty$, we have that all such invariant measures coincide, i.e., $\mu=\pi_\tau$, and thus \eqref{mix-uk} holds.    
\end{proof}

\subsection{Exponential Ergodicity of DIEG Scheme}

Next, let us generalize the arguments in the previous two parts to spatial discretizations via Galerkin approximations of the temporal DIE scheme \eqref{die}.
Then, we derive the time-independent strong convergence rate for the fully discrete scheme towards the exact solution of Eq. \eqref{see}.

Let $h\in (0,1)$, $\TT_h$ be a regular family of partitions of $\OOO$ with maximal length $h$, and $V_h \subset \dot H^1$ be the space of continuous functions on $\bar \OOO$ which are piecewise linear over $\TT_h$ and vanish on the boundary $\partial \OOO$.
Let $A_h: V_h \rightarrow V_h$ and $\PP_h: \dot H^{-1} \rightarrow V_h$ be the discrete Laplacian and generalized orthogonal projection operators, respectively, defined by 
\begin{align*}  
\<v^h, A_h x^h\> & =-\<\nabla x^h, \nabla v^h\>,
\quad x^h, v^h\in V_h,  \\
\<v^h, \PP_h z\> & =_1\<v^h, z\>_{-1},
\quad z \in \dot H^{-1},\ v^h\in V_h.   
\end{align*}

We discrete DIE \eqref{die} in space with the Galerkin method, called the DIEG  scheme, and the resulting fully discrete problem is to find a $V^h$-valued discrete process $\{X^h_m:\ m \in \nn\}$ such that
\begin{align}\label{die-g} \tag{DIEG}
\begin{split}
&X^h_{m+1}
=X^h_m+\tau A_h X^h_{m+1}
+\tau \PP_h F(X^h_{m+1})
+\PP_h G(X^h_m) \delta_m W, \\
& X^h_0=\PP_h X_0,
\end{split}
\end{align} 
$m \in \nn$, which had been widely studied; see, e.g., \cite{FLZ17}, \cite{QW19}, \cite{LQ21}.
It is clear that the DIEG scheme \eqref{die-g} is equivalent to the scheme
\begin{align*}
X^h_{m+1}=S_{h,\tau} X^h_m+\tau S_{h,\tau} \PP_h F(X^h_{m+1})
+S_{h,\tau} \PP_h G(X^h_m) \delta_m W,
\quad m\in \nn,
\end{align*}
with initial datum $X^h_0=\PP_h X_0$, where $S_{h,\tau}:=({\rm Id}-\tau A_h)^{-1}$ is a space-time approximation of the continuous semigroup $S$ in one step.
Iterating the above equation for $m$-times, we obtain 
\begin{align*}
X^h_{m+1}
& =S_{h,\tau}^{m+1} X^h_0+\tau \sum_{i=0}^m S_{h,\tau}^{m+1-i} \PP_h F(X^h_{i+1})   \\
& \quad +\sum_{i=0}^m S_{h,\tau}^{m+1-i} \PP_h G(X^h_i) \delta_i W,
\quad m \in \nn.
\end{align*}
Throughout this part we assume that $\tau \in (0, 1)$ with $(L_1-\lambda_1) \tau<1$ so that \eqref{die-g} is well-posedness.

As in Lemmas \ref{lm-sta-uk} and \ref{lm-con-uk}, we can show the uniform stability and continuous dependence on initial data independent of $h$, where the resulting constants appeared at Lemmas \ref{lm-sta-uk} and \ref{lm-con-uk}.
The proof is analogous to that of Lemmas \ref{lm-sta-uk} and \ref{lm-con-uk}, noting that $\PP_h$ is an orthogonal projection operator also contracted, so we omit the details.

\begin{lm}  
Let $X_0, x, y \in H$, Assumption \ref{ap-f} and conditions \eqref{g-lip}-\eqref{g-gro} hold.
Denote by $\{X^h_m: m \in \nn\}$, $\{X_m^{h,x}: m \in \nn\}$, and $\{X_m^{h,y}: m \in \nn\}$ the solutions of the DIEG scheme \eqref{die-g} starting from $X_0$, $x$, and $y$, respectively.
\begin{itemize}
\item[i)]
Assume that $L_2+L_7/2<\lambda_1$.
Then for any $\tau \in (0, 1)$, 
\begin{align} \label{sta-ukh}
\ee \|X_m^h\|^2 \le e^{-\gamma_5 t_m} \|X_0\|^2+C_{\gamma_5}, \quad m \in \nn.
\end{align}  

\item[ii)]
Assume that $L_1+L_6/2<\lambda_1$. 
Then for any $\tau \in (0, 1)$,
\begin{align} \label{con-ukh}
\ee \|X_m^{h,x}-X_m^{h,y}\|^2 \le e^{-\gamma_6 t_m} \|X-Y\|^2.
\quad m \in \nn.
\end{align}  
Consequently, the corresponding fully discrete Markov semigroup, denoted by $\{P_m^h\}_{m \in \nn}$, of the DIEG scheme \eqref{die-g} is Feller.  
\end{itemize}
\end{lm}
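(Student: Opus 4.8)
The plan is to repeat, almost verbatim, the energy estimates in the proofs of Lemma~\ref{lm-sta-uk} and Lemma~\ref{lm-con-uk}; the only new ingredient is the way the Galerkin operators $A_h$ and $\PP_h$ interact with the inner product. Two elementary facts suffice. First, by the definition of $A_h$ one has $\<v^h, A_h v^h\> = -\|\nabla v^h\|^2$ for $v^h \in V_h$, and since $V_h \subset \dot H^1$ the Poincar\'e inequality \eqref{poin} yields $\<v^h, A_h v^h\> \le -\lambda_1 \|v^h\|^2$. Second, $\PP_h$ is the $H$-orthogonal projection onto $V_h$, hence a contraction on $H$; thus $\|\PP_h G(z)\|_{\LL_2^0} \le \|G(z)\|_{\LL_2^0}$, and for $v^h \in V_h$ one has $\<v^h, \PP_h w\> = {}_1\<v^h, w\>_{-1}$ when $w \in \dot H^{-1}$ and $\<v^h, \PP_h w\> = \<v^h, w\>$ when $w \in H$.

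For part i), I would test \eqref{die-g} with $X_{k+1}^h \in V_h$ in the $\<\cdot,\cdot\>$-inner product, using \eqref{ab} and the two facts above to rewrite the $A_h$- and $\PP_h F$-terms, and reach, exactly as at \eqref{sta-uk0},
\begin{align*}
& \|X_{k+1}^h\|^2 - \|X_k^h\|^2 + \|X_{k+1}^h - X_k^h\|^2 \\
& = 2\tau\<X_{k+1}^h, A_h X_{k+1}^h\> + 2\tau\,{}_1\<X_{k+1}^h, F(X_{k+1}^h)\>_{-1} \\
& \quad + 2\<X_{k+1}^h - X_k^h, \PP_h G(X_k^h)\delta_k W\> + 2\<X_k^h, \PP_h G(X_k^h)\delta_k W\>.
\end{align*}
By the two facts and the coercivity bound \eqref{F-coe+}, the first line is at most $-2\tau(\lambda_1 - L_2)\|X_{k+1}^h\|^2 + 2L_3\tau$, playing the role of \eqref{F-coe-0} before; bounding the first noise term by Cauchy--Schwarz cancels $\|X_{k+1}^h - X_k^h\|^2$; and taking expectations, using that $X_k^h$ and $G(X_k^h)$ are independent of $\delta_k W$, It\^o's isometry, $\|\PP_h G(X_k^h)\|_{\LL_2^0}^2 \le \|G(X_k^h)\|_{\LL_2^0}^2$, and the growth bound \eqref{g-gro}, gives the same recursion $(1 + 2(\lambda_1 - L_2)\tau)\ee\|X_{k+1}^h\|^2 \le (1 + L_7\tau)\ee\|X_k^h\|^2 + (2L_3 + L_8)\tau$ as in Lemma~\ref{lm-sta-uk}. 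Since $L_2 + L_7/2 < \lambda_1$ forces the ratio of these coefficients into $(0,1)$ for every $\tau \in (0,1)$, iterating and invoking \eqref{e} yields \eqref{sta-ukh} with the same $\gamma_5$ and $C_{\gamma_5}$.

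Part ii) is identical in spirit: subtract the two instances of \eqref{die-g}, set $E_k^h := X_k^{h,x} - X_k^{h,y} \in V_h$, test with $E_{k+1}^h$, use the monotonicity bound \eqref{F-mon+} together with Poincar\'e to get $2\tau\<E_{k+1}^h, A_h E_{k+1}^h\> + 2\tau\,{}_1\<E_{k+1}^h, F(X_{k+1}^{h,x}) - F(X_{k+1}^{h,y})\>_{-1} \le -2\tau(\lambda_1 - L_1)\|E_{k+1}^h\|^2$, and use \eqref{g-lip} with the contractivity of $\PP_h$ for the noise. This produces $(1 + 2(\lambda_1 - L_1)\tau)\ee\|E_{k+1}^h\|^2 \le (1 + L_6\tau)\ee\|E_k^h\|^2$, hence, via \eqref{e}, the estimate \eqref{con-ukh} (with $\|x - y\|$ on the right, correcting the evident misprint) and the same $\gamma_6$ as in Lemma~\ref{lm-con-uk}. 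The Feller property then follows: if $\phi \in \CC_b(H)$ and $x_n \to x$ in $H$, then \eqref{con-ukh} gives $X_m^{h,x_n} \to X_m^{h,x}$ in $L^2(\Omega; H)$, hence in probability, and a subsequence argument together with the boundedness and continuity of $\phi$ and dominated convergence yields $P_m^h\phi(x_n) \to P_m^h\phi(x)$, so $P_m^h\phi \in \CC_b(H)$.

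There is no genuine obstacle, since each step copies the semi-discrete arguments line by line; the sole point to verify is that $\PP_h$ --- being at once the $H$-orthogonal projection onto $V_h$ and the $\dot H^{-1}$-to-$V_h$ projection --- leaves intact both the duality identities and the contractivity used in the estimates, which is precisely why $\gamma_5$, $C_{\gamma_5}$, and $\gamma_6$ carry over unchanged from Lemmas~\ref{lm-sta-uk} and \ref{lm-con-uk}.
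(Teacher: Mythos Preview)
Your proposal is correct and matches the paper's own approach: the paper states only that ``the proof is analogous to that of Lemmas~\ref{lm-sta-uk} and \ref{lm-con-uk}, noting that $\PP_h$ is an orthogonal projection operator also contracted,'' and omits all details. You have faithfully reconstructed those details, correctly identifying the two key facts---that $\<v^h, A_h v^h\> \le -\lambda_1 \|v^h\|^2$ via Poincar\'e and that $\PP_h$ restricted to $H$ is the $H$-orthogonal projection (hence a contraction, and satisfying $\<v^h, \PP_h w\> = {}_1\<v^h, w\>_{-1}$)---which allow the semi-discrete recursions to carry over verbatim with the same constants.
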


We can show that the DIEG scheme \eqref{die-g} is exponentially mixing.
On the contrary to the case of DIE scheme \eqref{die}, the dimension of state space $V_h$ of the DIEG scheme \eqref{die-g} is finite.

\begin{tm} \label{tm-erg-ukh} 
Let Assumption \ref{ap-f} and conditions \eqref{g-lip}-\eqref{g-gro} hold.
Assume that $(L_1+L_6/2) \vee (L_2+L_7/2)<\lambda_1$.
Then for any $\tau \in (0, 1)$ and $h \in (0, 1)$, the DIEG scheme \eqref{die-g} possesses a unique invariant measure $\pi_\tau^h$  in $V_h$ which is exponentially mixing: 
\begin{align} \label{mix-ukh}
|P_m^h \phi(x)-\pi_\tau^h (\phi)| \le \exp(-\gamma_6 t_m/2)|\phi|_{\rm Lip}\int_H \|x-y\| \pi_\tau^h ({\rm d} y),
\end{align}      
for any $m \in \nn$, $x \in H$, and $\phi \in {\rm Lip}_b(H)$, where $\gamma_6$ appeared at \eqref{con-uk}.
\end{tm}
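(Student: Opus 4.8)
The plan is to mirror the proof of Theorem \ref{tm-erg-uk} exactly, since the DIEG scheme \eqref{die-g} differs from the DIE scheme \eqref{die} only by the presence of the orthogonal projection $\PP_h$, and all the required ingredients for the finite-dimensional Galerkin scheme have already been assembled in the lemma preceding this statement. Concretely, I would first invoke the continuous-dependence estimate \eqref{con-ukh} together with the Feller property of $\{P_m^h\}_{m\in\nn}$ stated there. For \emph{existence} of an invariant measure $\pi_\tau^h$, I would introduce the Krylov--Bogoliubov averages $\mu_m^{\tau,h}(B):=\frac1m\sum_{k=0}^m P_k^h\chi_B(\PP_h 0)$ for $B\in\BB(V_h)$; here one can even take $X_0^h=0$. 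To obtain tightness of $\{\mu_m^{\tau,h}\}_{m}$ I would need a temporal-average estimate of the form $\sup_{m\in\nn_+}\frac1m\sum_{k=0}^m\ee\|X_k^h\|_1^2\le C(1+\|X_0\|^2)$, which is the Galerkin analogue of \eqref{sta-uk+}; this follows from the same testing argument as in Lemma \ref{lm-sta-uk}, using that $A_h$ is the discrete Laplacian, that $\PP_h$ is a contraction on $H$, and that $\|\nabla \PP_h z\|\le\|\nabla z\|$ on $\dot H^1$, so that subtracting a small $\varepsilon$-multiple of the Dirichlet-form term yields the summable gradient bound. Then Chebyshev's inequality gives, for each $\varepsilon>0$, a ball $B=\{x\in V_h:\|x\|_1\le I_\varepsilon\}$ (compact, since $V_h$ is finite-dimensional) with $\mu_m^{\tau,h}(B^c)\le CI_\varepsilon^{-2}<\varepsilon$ uniformly in $m$. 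Tightness plus the Feller property yields, via Krylov--Bogoliubov (\cite[Theorem 1.2]{HW19}), an invariant measure $\pi_\tau^h$ on $V_h$ with $\pi_\tau^h(\|\cdot\|^2)<\infty$.

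For \emph{uniqueness} and \emph{exponential mixing}, I would take an arbitrary invariant measure $\mu$ of $\{P_m^h\}$ and estimate, for $\phi\in{\rm Lip}_b(H)$, $x\in V_h$, and $m\in\nn$,
\begin{align*}
|P_m^h\phi(x)-\mu(\phi)|
&=\Big|\int_H \big(P_m^h\phi(x)-P_m^h\phi(y)\big)\,\mu({\rm d}y)\Big|
\le |\phi|_{\rm Lip}\int_H \ee\|X_m^{h,x}-X_m^{h,y}\|\,\mu({\rm d}y) \\
&\le \exp(-\gamma_6 t_m/2)\,|\phi|_{\rm Lip}\int_H \|x-y\|\,\mu({\rm d}y),
\end{align*}
where the middle step uses the definition of $P_m^h$ and Jensen's inequality, and the last step uses \eqref{con-ukh} (noting $\sqrt{\ee\|E_m\|^2}\le e^{-\gamma_6 t_m/2}\|x-y\|$). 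Letting $m\to\infty$ forces $\mu(\phi)=\pi_\tau^h(\phi)$ for all such $\phi$, hence $\mu=\pi_\tau^h$; applying the displayed bound with $\mu=\pi_\tau^h$ gives \eqref{mix-ukh}.

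I do not expect a genuine obstacle here: the only point requiring a little care is justifying the temporal-average estimate \eqref{sta-uk+} at the fully discrete level, i.e.\ that inserting $\PP_h$ in front of $F$ and $G$ does not spoil the argument of Lemma \ref{lm-sta-uk}. This is handled exactly as the paper already indicates when it says ``$\PP_h$ is an orthogonal projection operator also contracted'': testing \eqref{die-g} with $X_{k+1}^h$, one has $\<X_{k+1}^h,\PP_h F(X_{k+1}^h)\>={}_1\<X_{k+1}^h,F(X_{k+1}^h)\>_{-1}$ by the defining property of $\PP_h$ and $X_{k+1}^h\in V_h$, while $\|\PP_h G(X_k^h)\delta_k W\|\le\|G(X_k^h)\delta_k W\|$ and $\<X_{k+1}^h,A_h X_{k+1}^h\>=-\|\nabla X_{k+1}^h\|^2\le-\lambda_1\|X_{k+1}^h\|^2$; the rest is verbatim. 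The remaining arguments (Krylov--Bogoliubov, the contraction-to-uniqueness passage) are identical to Theorem \ref{tm-erg-uk} and need no new idea.
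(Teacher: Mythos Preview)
Your proposal is correct, but the tightness step you outline is more elaborate than what the paper actually does. You mirror Theorem \ref{tm-erg-uk} literally and therefore invoke a fully discrete analogue of the temporal-average gradient bound \eqref{sta-uk+}, taking the compact set to be an $\dot H^1$-ball in $V_h$. The paper instead exploits the fact that $V_h$ is \emph{finite-dimensional}: closed $H$-norm balls in $V_h$ are already compact, so it suffices to use the simpler uniform bound \eqref{sta-ukh}, namely $\sup_{k\in\nn}\ee\|X_k^{h,0}\|^2\le C$, and take $B=\{x\in V_h:\|x\|\le J_\varepsilon\}$. This spares the paper from having to state or prove a Galerkin version of \eqref{sta-uk+}. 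Your route works too (and would be the right one if the state space were infinite-dimensional), but it carries the extra burden of justifying the discrete temporal-average $\dot H^1$ estimate; incidentally, the inequality $\|\nabla\PP_h z\|\le\|\nabla z\|$ you list is neither needed for that argument nor obvious without a mesh assumption, though the testing identities you actually use are correct. The uniqueness/mixing part of your argument is identical to the paper's.
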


\begin{proof}
Consider the sequence $\{\mu^\tau_m\}_{m \in \nn_+}$ of probability measures
\begin{align*}
\mu^{\tau,h}_m(B):= \frac1m \sum_{k=0}^m P_k^h \chi_B(0) dt, \quad B \in \BB(V_h), ~m \in \nn_+. 
\end{align*}    
By Chebyshev inequality and the uniform estimate \eqref{sta-ukh}, 
for any $\varepsilon>0$, there exists a positive scalar $J_\varepsilon$ and a compact set $B=\{x \in V_h:~ \|x\| \le J_\varepsilon\} \subset \BB(H)$ such that 
\begin{align*}  
\mu^{\tau,h}_m(B^c)
& =\frac1m \sum_{k=0}^m \pp(\|X_k^{h, 0}\|>J_\varepsilon) dt  \\
& \le J^{-2}_\varepsilon \sup_{k \in \nn} \ee \|X_k^{h, 0}\|^2 
\le C J^{-2}_\varepsilon<\varepsilon,
\end{align*}
from which we have that $\{\mu_t\}_{T \ge 1}$ is tight. 
The exponential mixing estimate \eqref{mix-ukh}  and the existence of a unique invariant measure follow from the continuous dependence estimate \eqref{con-ukh} and similar arguments in the proof of Theorem \ref{tm-erg-uk}.
\end{proof}

\subsection{Uniform Strong Error estimate of DIEG Scheme}

In this part, we aim to derive time-independent strong error estimates of the proposed DIEG scheme \eqref{die-g} towards the exact solution of Eq. \eqref{see}.

We will combine semigroup and variational theories as in \cite{LQ21}.
To this end, for $m\in \nn$, let us denote by $E_{h,\tau}(t)=S(t)-S_{h,\tau}^{m+1} \PP_h$ for $t\in (t_m, t_{m+1}]$, and define the auxiliary process
(see \cite{LQ21} and \cite{QW19}) 
\begin{align}\label{aux1}
\widetilde{X}^h_{m+1}
& =S_{h,\tau}^{m+1} X^h_0
+\tau \sum_{i=0}^m S_{h,\tau}^{m+1-i} \PP_h F(X(t_{i+1})) \nonumber \\
& \quad +\sum_{i=0}^m S_{h,\tau}^{m+1-i} \PP_h G(X(t_i)) \delta_i W.
\end{align}  
In view of the well-known uniform boundedness of $S_{h,\tau}$, the conditions \eqref{con-f'} and \eqref{g-gro-1}, and the uniform stability \eqref{sta-u-1}, it is not difficult to show that for any $p \ge 2$, 
\begin{align}\label{sta-aux}
\sup_{k \in \nn} \ee \|\widetilde{X}^h_k\|_1^p
\le C (1+ \sup_{t \ge 0} \ee \|X_t\|_1^{p q})
\le C (1+ \|X_0\|_1^{p q}).
\end{align}  

We begin with the following uniform strong error estimate between the exact solution $u$ of Eq. \eqref{see} and the auxiliary process $\{\widetilde{X}^h_m\}_{m \in \nn}$ defined by \eqref{aux1}.
It is proved using the idea in \cite[Lemma 4.2]{LQ21} and the uniform moments' bound \eqref{sta-u-1}.

\begin{lm}  
Let $X_0\in \dot H^1$ and Assumptions \ref{ap-f}-\ref{ap-g} hold.
Assume that $p\ge 2$ and $L_1+\frac{p(2q-1)-1}2 L_9 < \lambda_1$.
Then there exist a positive constant $C$ such that for any $\tau \in (0, 1)$, 
\begin{align} \label{u-uhathm}
\sup_{m \in \nn} \ee \|X(t_m)-\widetilde{X}^h_m\|^p
\le C (1+ \|X_0\|_1^{p(2q-1)}) (h^p+\tau^{p/2}).
\end{align} 
\end{lm}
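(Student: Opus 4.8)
The plan is to estimate the error $Y_m := X(t_m) - \widetilde{X}^h_m$ by subtracting the mild formulation of the exact solution
\[
X(t_{m+1}) = S(t_{m+1}) X_0 + \int_0^{t_{m+1}} S(t_{m+1}-s) F(X(s))\,{\rm d}s + \int_0^{t_{m+1}} S(t_{m+1}-s) G(X(s))\,{\rm d}W(s)
\]
from the definition \eqref{aux1} of $\widetilde{X}^h_{m+1}$, and then splitting the difference into three groups of terms: (i) the initial-data term $(S(t_{m+1}) - S_{h,\tau}^{m+1}\PP_h)X_0 = E_{h,\tau}(t_{m+1})X_0$; (ii) the drift terms, which further decompose into a \emph{space-time approximation error} of the semigroup acting on $F(X(t_{i+1}))$ plus a \emph{time-discretization/quadrature error} of the integral $\int_0^{t_{m+1}} S(t_{m+1}-s)F(X(s))\,{\rm d}s$ versus $\tau\sum_i S_{h,\tau}^{m+1-i}\PP_h F(X(t_{i+1}))$; and (iii) the stochastic terms, split analogously into a semigroup-approximation piece and a quadrature piece, the latter to be handled via the Burkholder–Davis–Gundy inequality. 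This is precisely the decomposition of \cite[Lemma 4.2]{LQ21}; the only new point is that every $t_m$-dependent factor must be bounded by a \emph{time-independent} constant, which is what the uniform moment bound \eqref{sta-u-1} and its consequence \eqref{sta-aux}, together with \eqref{con-f'}, \eqref{g-gro-1} and the embeddings \eqref{emb}, are designed to supply.

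First I would record the standard deterministic smoothing/approximation estimates for $E_{h,\tau}(t) = S(t) - S_{h,\tau}^{m+1}\PP_h$ on $(t_m,t_{m+1}]$: bounds of the form $\|E_{h,\tau}(t)v\| \le C(h + \tau^{1/2})\|v\|_1$ and the integrated (square-summable) versions $\tau\sum_i \|E_{h,\tau}(t_{m+1-i})v\|^2 \le C(h^2+\tau)\|v\|^2$ and $\tau\sum_i\|E_{h,\tau}(t_{m+1-i})v\|^2 \le C(h^2+\tau)\|v\|_1^2$ with constants independent of $m$ — these are classical and already used in \cite{LQ21,QW19}. The initial-data term is then $\le C(h+\tau^{1/2})\|X_0\|_1$. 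For the deterministic drift quadrature error, inside each subinterval $[t_i,t_{i+1}]$ one writes $F(X(s)) - F(X(t_{i+1})) = \int$ of its increments and controls $\|F(X(s)) - F(X(t_{i+1}))\|$ using $|f'|\le L_4|\cdot|^{q-1}+L_5$ (giving a factor $\|X(s)\|_{L^{2(q-1)}}^{q-1}+1$ times $\|X(s)-X(t_{i+1})\|_{L^{2q}}$ or a $\dot H^1$-norm), then take $L^p(\Omega)$-norms and invoke \eqref{sta-u-1} together with the temporal Hölder-type bound already available from \eqref{hol-u} (or its $\tau$-discrete analogue) to extract the $\tau^{1/2}$ rate uniformly in $m$; summing the resulting $\tau$-many contributions and bounding the geometric tail of $\|S(t)\|$ keeps the constant time-independent. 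The stochastic quadrature error is treated the same way after BDG: $\ee\|\sum_i\int_{t_i}^{t_{i+1}} S(t_{m+1}-s)(G(X(s))-G(X(t_i)))\,{\rm d}W\|^p \lesssim \ee(\sum_i\int_{t_i}^{t_{i+1}}\|S(t_{m+1}-s)(G(X(s))-G(X(t_i)))\|_{\LL_2^0}^2\,{\rm d}s)^{p/2}$, and $\|G(X(s))-G(X(t_i))\|_{\LL_2^0}\le L_6^{1/2}\|X(s)-X(t_i)\|$ by \eqref{g-lip} brings back the temporal Hölder bound; the semigroup-approximation piece for the stochastic term uses the square-summed $E_{h,\tau}$ estimate and \eqref{g-gro-1}/\eqref{sta-aux}.

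The main obstacle is the precise moment bookkeeping that forces the hypothesis $L_1+\tfrac{p(2q-1)-1}{2}L_9<\lambda_1$: because $f'$ grows like $|\xi|^{q-1}$, the drift increment contributes a factor with $2(q-1)$ powers of $X$ measured in $\dot H^1$ (via \eqref{emb}), and to raise the whole error term to the $p$-th power and take expectation one needs $\sup_t\ee\|X_t\|_1^{p(2q-1)}<\infty$ — which by \eqref{sta-u-1} is exactly available provided $L_2+\tfrac{p(2q-1)-1}{2}L_9<\lambda_1$, and since $L_1\le L_2$ may be assumed, the stated condition suffices. I would therefore be careful to (a) apply Hölder in $\Omega$ to split the product $\|X\|_1^{q-1}$-type weight from the increment, matching the exponent so that the weight lands on $\|X\|_1^{p(2q-1)}$ and the increment on a power of $\|X(s)-X(t_i)\|$ to which \eqref{hol-u} (or its discrete version) applies, and (b) verify that all the geometric sums $\sum_i \|S(t_{m+1-i})\|^{\,\cdot}$ and $\sum_i e^{-c\,t_i}$ are bounded uniformly in $m$, which is where time-independence is genuinely won. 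Assembling the three groups and using $h+\tau^{1/2}\le C(h^p+\tau^{p/2})^{1/p}$-type comparisons (or raising to the $p$-th power termwise) yields \eqref{u-uhathm}.
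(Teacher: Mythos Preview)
Your proposal is correct and follows essentially the same route as the paper: both invoke the three-term decomposition of \cite[Lemma 4.2]{LQ21} into initial-data, drift, and stochastic parts (each further split into a semigroup-approximation piece governed by $E_{h,\tau}$ and a quadrature piece governed by the temporal H\"older estimate \eqref{hol-u}), and both replace the finite-horizon moment bounds of \cite{LQ21} by the uniform-in-time ones from \eqref{sta-u-1} and \eqref{hol-u} to obtain constants independent of $m$. One small slip: your justification that the hypothesis $L_1+\tfrac{p(2q-1)-1}{2}L_9<\lambda_1$ suffices is phrased backwards---the point is not that $L_1\le L_2$, but that (as noted after \eqref{f-coe}) one may take $L_2=L_1+\varepsilon$ for arbitrarily small $\varepsilon>0$, so the strict inequality with $L_1$ transfers to one with $L_2$, which is what \eqref{sta-u-1} actually requires.
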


\begin{proof}
Let $m\in \nn$.
In  \cite[Lemma 4.2]{LQ21}, the error 
\begin{align*}  
J^{m+1}:= (\ee \|X(t_{m+1})-\widetilde{X}^h_{m+1}\|^p)^{1/p}
\end{align*} 
is divided into three parts $J^{m+1}_i$, $i=1,2,3$, respectively, which satisfies
\begin{align*}  
J^{m+1}_1 & =(\ee \| E_{h,\tau}(t_{m+1}) X_0\|^p)^{1/p}
\le C (h+\tau^{1/2}) \|X_0\|_1,   \\
J^{m+1}_2 & = \Big(\ee \Big\|\int_0^{t_{m+1}} S_{t_{m+1}-r} F(X_r) {\rm d}r 
-\tau \sum_{i=0}^m S_{h,\tau}^{m+1-i} \PP_h F(X(t_{i+1})) \Big\|^p \Big)^{1/p}   \\
&\le C \Big(1+ \sup_{t \ge 0} (\ee \|X_t\|_1^{2p(q-1)})^\frac1{2p} \Big)
 \Big( \sup_{t \neq s} \frac{\ee \|X_t-X_s\|^{2p}}{|t-s|^p} \Big)^\frac1{2p} \tau^\frac12 \\
& \quad + C (h+\tau^{1/2})  \sup_{t \ge 0} (\ee \|F(X_t)\|^p)^{1/p},   \\
J^{m+1}_3 & = \Big(\ee \Big\|\int_0^{t_{m+1}} S_{t_{m+1}-r} G(X_r) {\rm d}W_r 
-\sum_{i=0}^m S_{h,\tau}^{m+1-i} \PP_h G(X(t_i)) \delta_i W \Big\|^p\Big)^{1/p} \\
&\le C \Big( \sup_{t \neq s} \frac{\ee \|X_t-X_s\|^{2p}}{|t-s|^p} \Big)^\frac1{2p} \tau^\frac12
+C (h+\tau^\frac 12) (1+\sup_{t \ge 0} (\ee \|X_t\|_1^p)^\frac1p).
\end{align*} 
Putting the above three estimates together results in
\begin{align*} 
J^{m+1}
& \le C \Big(1+\|X_0\|_1+\Big(1+\sup_{t \ge 0} (\ee \|X_t\|_1^{2p(q-1)})^\frac1{2p} \Big) \\
& \qquad \times \Big( \sup_{t \neq s} \frac{\ee \|X_t-X_s\|^{2p}}{|t-s|^p} \Big)^\frac1{2p}  \\
& \quad +  \sup_{t \ge 0} (\ee \|F(X_t)\|^p)^\frac1p
+ \sup_{t \ge 0} (\ee \|X_t\|_1^p)^\frac1p \Big) (h+\tau^\frac12).
\end{align*}
This inequality, in combination with the estimate on $J^0$ that  
\begin{align*} 
J^0=\|X_0-\PP_h X_0\|_{L_\omega^p L^2}
\le C h \|X_0\|_ 1, 
\end{align*}
the condition \eqref{con-f'}, the embeddings \eqref{emb}, the uniform stability estimate \eqref{sta-u-1}, and the uniform H\"older estimate \eqref{hol-u}, completes the proof of \eqref{u-uhathm}. 
\end{proof}

Now, we are at the position of deriving the uniform strong error estimate between the exact solution $u$ of Eq. \eqref{see} and the DIEG scheme \eqref{die-g}. 
  
\begin{tm}  \label{tm-err}
Let $X_0 \in \dot H^1$, Assumptions \ref{ap-f}-\ref{ap-g} hold.
Assume that $L_1+\frac{2(q^2+q-1)-1}2 L_9 < \lambda_1$.
Then for any $\tau \in (0, 1)$ there exist a positive constant $C$ such that    
\begin{align} \label{err} 
& \sup_{k \in \nn} \ee \|X_{t_k}-X^h_k\|^2 
\le C (1+ \|X_0\|_1^{2(q^2+q-1)}) (h^2+\tau).
\end{align}  
\end{tm}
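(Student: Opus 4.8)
The plan is to split the error via the triangle inequality through the auxiliary process $\widetilde X^h_m$ defined in \eqref{aux1}:
\begin{align*}
(\ee \|X_{t_k}-X^h_k\|^2)^{1/2}
\le (\ee \|X_{t_k}-\widetilde X^h_k\|^2)^{1/2}
+ (\ee \|\widetilde X^h_k-X^h_k\|^2)^{1/2}=:\mathrm{I}_k+\mathrm{II}_k.
\end{align*}
The term $\mathrm{I}_k$ is controlled uniformly in $k$ by the preceding lemma \eqref{u-uhathm} (applied with $p=2$, whose hypothesis $L_1+\frac{2(2q-1)-1}2 L_9<\lambda_1$ is implied by the stronger standing assumption $L_1+\frac{2(q^2+q-1)-1}2 L_9<\lambda_1$ since $q\ge 1$ gives $q^2+q-1\ge 2q-1$): it is bounded by $C(1+\|X_0\|_1^{2(2q-1)})(h^2+\tau)$. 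So the work is entirely in estimating $\mathrm{II}_k$, the difference between the auxiliary process driven by the \emph{exact} solution's values $F(X(t_{i+1}))$, $G(X(t_i))$ and the genuine DIEG iterates driven by $F(X^h_{i+1})$, $G(X^h_i)$.

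For $\mathrm{II}_k$ I would set $e^h_m:=\widetilde X^h_m-X^h_m$ and subtract the two recursions in their one-step (non-iterated) form, obtaining
\begin{align*}
e^h_{m+1}=S_{h,\tau}e^h_m+\tau S_{h,\tau}\PP_h\big(F(X(t_{m+1}))-F(X^h_{m+1})\big)
+S_{h,\tau}\PP_h\big(G(X(t_m))-G(X^h_m)\big)\delta_m W.
\end{align*}
Testing with $e^h_{m+1}$ in $\<\cdot,\cdot\>$, using $2\<a-b,a\>=\|a\|^2-\|b\|^2+\|a-b\|^2$, the self-adjointness and negativity of $A_h$, the contractivity of $\PP_h$, and the one-sided Lipschitz bound \eqref{F-mon+} applied to $F(X(t_{m+1}))-F(X^h_{m+1})$ (splitting off $F(X(t_{m+1}))-F(\widetilde X^h_{m+1})$, which is a perturbation term not controlled by monotonicity and must instead be absorbed using \eqref{con-f'} together with the bounds \eqref{sta-u-1}, \eqref{sta-aux} on the $\dot H^1$-moments of $X$ and $\widetilde X^h$), gives after taking expectations and using It\^o's isometry and \eqref{g-lip} a recursion of the form
\begin{align*}
(1+2(\lambda_1-L_1)\tau)\,\ee\|e^h_{m+1}\|^2
\le (1+L_6\tau)\,\ee\|e^h_m\|^2 + \tau\cdot R_{m+1},
\end{align*}
where $R_{m+1}$ collects the cross terms coming from $F(X(t_{m+1}))-F(\widetilde X^h_{m+1})$ and, via Young's inequality, is bounded by $C(1+\|X_0\|_1^{2(q^2+q-1)})(h^2+\tau)$ uniformly in $m$ — this is where the exponent $2(q^2+q-1)$ arises, from the $(q-1)$-H\"older-type growth of $f'$ being applied to quantities whose moments of order up to $2(2q-1)\cdot q$ must be finite, forcing the assumption $L_1+\frac{2(q^2+q-1)-1}2 L_9<\lambda_1$. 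Since $\frac{1+L_6\tau}{1+2(\lambda_1-L_1)\tau}=:a<1$ under the hypothesis $L_1+L_6/2<\lambda_1$, iterating yields $\ee\|e^h_m\|^2\le a^m\|e^h_0\|^2+\frac{C(1+\|X_0\|_1^{2(q^2+q-1)})(h^2+\tau)}{1-a}$, and $e^h_0=\widetilde X^h_0-X^h_0=0$, so $\mathrm{II}_k^2$ is bounded uniformly in $k$ by $C(1+\|X_0\|_1^{2(q^2+q-1)})(h^2+\tau)$. Combining $\mathrm{I}_k$ and $\mathrm{II}_k$ and squaring gives \eqref{err}.

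The main obstacle is the treatment of the perturbation term $F(X(t_{m+1}))-F(\widetilde X^h_{m+1})$ in $R_{m+1}$: it cannot be handled by the one-sided Lipschitz condition (the arguments are not a solution difference of the same scheme), so one must use the polynomial growth \eqref{con-f'} of $f'$, the mean value theorem, H\"older's inequality, and the \emph{time-independent} high-order moment bounds \eqref{sta-u-1} and \eqref{sta-aux} — keeping every constant independent of $m$, $\tau$, and $h$. Getting the exact polynomial exponent $2(q^2+q-1)$ in $\|X_0\|_1$ and the precise smallness condition on $L_1$ requires careful bookkeeping of which moments are invoked, but no new idea beyond what is already used in the preceding lemma and in \cite[Theorem 4.1]{LQ21}.
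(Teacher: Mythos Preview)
Your proposal is correct and follows essentially the same route as the paper: split through the auxiliary process $\widetilde X^h_m$, control $\mathrm{I}_k$ by the preceding lemma, and for $\mathrm{II}_k$ derive an energy recursion for $e^h_m=\widetilde X^h_m-X^h_m$ by testing the implicit one-step difference with $e^h_{m+1}$, splitting both $F(X(t_{m+1}))-F(X^h_{m+1})$ and $G(X(t_m))-G(X^h_m)$ through $\widetilde X^h$, handling the $\widetilde X^h$-to-$X^h$ parts by \eqref{F-mon-0} and \eqref{g-lip}, and bounding the perturbation parts via \eqref{u-uhathm}, \eqref{con-f'}, \eqref{sta-u-1}, \eqref{sta-aux}. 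The only cosmetic discrepancy is that the paper carries explicit $\varepsilon$'s (so the recursion reads $(1+2(\lambda_1-L_1-\varepsilon)\tau)$ on the left and $(1+L_6(1+\varepsilon)\tau)$ on the right), coming from Young's inequality on the $F$-cross term and the split $\|x+y\|^2\le(1+\varepsilon)\|x\|^2+(1+\varepsilon^{-1})\|y\|^2$ on the $G$-term; your version without them is morally the same and leads to the same smallness condition in the limit $\varepsilon\to 0$.
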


\begin{proof}
For $k \in \nn$, denote  
$e^h_k: =\widetilde{X}^h_k-X^h_k$.
From \eqref{aux1}, it is clear that 
\begin{align*}
\widetilde{X}^h_{m+1}
=\widetilde{X}_m^h+\tau A_h \widetilde{X}^h_{m+1}
+\tau \PP_h F(X_{t_{m+1}})
+\PP_h G(X_{t_m}) \delta_m W.
\end{align*} 
Then, by \eqref{die-g} and the above equality, we have 
\begin{align*}
e^h_{k+1}-e^h_k 
& = [\Delta_h e^h_{k+1}+ \PP_h (F(X_{t_{k+1}})-F(X^h_{k+1}) ] \tau   \\
& \quad + \PP_h (G(X_{t_k})-G(X^h_k)) \delta_k W.
\end{align*} 
Testing with $e^h_{k+1}$ and using the inequality \eqref{ab}, the condition \eqref{F-mon-0}, Cauchy--Schwarz inequality, and the elementary inequality $\|x+y\|^2 \le (1+\varepsilon) \|x\|^2 +(1+\varepsilon^{-1}) \|y\|^2$ for $x,y \in H$, we obtain
\begin{align*} 
& \|e^h_{k+1}\|^2 - \|e^h_k\|^2 
+ \|e^h_{k+1}-e^h_k\|^2   \\
&=2 ~_1\<e^h_{k+1},  F(X_{t_{k+1}})-F(\widetilde{X}^h_{k+1})\>_{-1} \tau \\
& \quad + 2 ~_1\<e^h_{k+1}, \Delta e^h_{k+1}+ F(\widetilde{X}^h_{k+1})-F(X^h_{k+1}) \>_{-1} \tau \\
& \quad + 2 \<e^h_{k+1}-e^h_k, (G(X_{t_k})-G(X^h_k)) \delta_k W\> \\
& \quad + 2 \<e^h_k, (G(X_{t_k})-G(X^h_k)) \delta_k W\>\\
&\le  -2 (\lambda_1-L_1-\varepsilon) \tau \|e^h_{k+1}\|^2  
+ \|e^h_{k+1}-e^h_k\|^2  \\
&\quad 
+ 2 \<e^h_k, (G(X_{t_k})-G(X^h_k)) \delta_k W\> +C_\varepsilon \tau \|F(X_{t_{k+1}})-F(\widetilde{X}^h_{k+1})\|^2_{-1}  \nonumber \\
&\quad + (1+\varepsilon^{-1}) \|(G(X_{t_k})-G(\widetilde{X}^h_k)) \delta_k W\|^2 \\
& \quad +  (1+\varepsilon) \|(G(\widetilde{X}^h_k)-G(X^h_k)) \delta_k W\|^2.
\end{align*} 
It follows that 
\begin{align*}  
& (1-2 (\lambda_1-L_1-\varepsilon) \tau) \|e^h_{k+1}\|^2    \\
&\le \|e^h_k\|^2 + 2 \<e^h_k, (G(X_{t_k})-G(X^h_k)) \delta_k W\> \\
& \quad + (1+\varepsilon^{-1}) \|(G(X_{t_k})-G(\widetilde{X}^h_k)) \delta_k W\|^2  \\
&\quad +C_\varepsilon \tau \|F(X_{t_{k+1}})-F(\widetilde{X}^h_{k+1})\|^2_{-1}  \\
& \quad + (1+\varepsilon) \|(G(\widetilde{X}^h_k)-G(X^h_k)) \delta_k W\|^2.
\end{align*}
Now, taking expectation on both sides of the above inequality and using the fact 
$\ee \<e^h_k, (G(X_{t_k})-G(X^h_k)) \delta_k W\>=0$
and the estimates  
\begin{align*} 
\sup_{k \in \nn}  \ee \|F(X_{t_{k+1}})-F(\widetilde{X}^h_{k+1})\|^2_{-1} 
& \le C (1+ \|X_0\|_1^{2(q^2+q-1)})(h^2+\tau), \\
\sup_{k \in \nn} \ee \|G(X_{t_k})-G(\widetilde{X}^h_k)\|_{\LL_2^0}^2
& \le C (1+ \|X_0\|_1^{2(2q-1)})(h^2+\tau), 
\end{align*}  
which follow from \eqref{u-uhathm} and the conditions \eqref{con-f'} and \eqref{g-lip} (see \cite[Theorem 4.1]{Liu22} for details), the embeddings \eqref{emb}, and the uniform stability estimates \eqref{sta-u} and \eqref{sta-aux}, we get
\begin{align*}  
& (1+2 (\lambda_1-L_1-\varepsilon) \tau) \ee \|e^h_{k+1}\|^2 \\
&  \le (1+ L_6(1+\varepsilon) \tau) \ee \|e^h_k\|^2 
+ C_\varepsilon (1+ \|X_0\|_1^{2(2q-1)})(h^2+\tau)  \tau,
\end{align*}  
from which we obtain
\begin{align*}  
\ee \|e^h_k\|^2 
& \le \Big(\frac{1+ L_6(1+\varepsilon) \tau}{1+2 (\lambda_1-L_1-\varepsilon) \tau}\Big)^k \|e^h_0\|^2 \\
& \quad + \frac{C_\varepsilon (1+ \|X_0\|_1^{2(2q-1)}) (h^2+\tau)  \tau}{1+2 (\lambda_1-L_1-\varepsilon) \tau} \sum_{j=0}^{k-1} \Big(\frac{1+ L_6(1+\varepsilon) \tau}{1+2 (\lambda_1-L_1-\varepsilon) \tau}\Big)^j,
\end{align*}   
for $k \in \nn$.
Now we can conclude \eqref{err} by the above estimate, the inequality \eqref{e}, the estimate \eqref{u-uhathm}, and the estimate $ \ee \|X_{t_k}-X^h_k\|^2  \le 2  \ee \|X_{t_k}-\widetilde{X}^h_k\|^2 + 2 \ee \|e^h_k\|^2$.
\end{proof}

Finally, we can apply the previous arguments to the following DIE Milstein (DIEM) scheme and its Galerkin full discretization under an additional infinite-dimensional commutativity condition, see \cite[Assumption 5.2 and Example 5.1]{LQ21}, which is frequently used to construct temporal high order scheme:
\begin{align}  
Y_{m+1}
&=Y_m+\tau A Y_{m+1}
+\tau F(X_{m+1})
+ G(X_m) \delta_m W  \nonumber \\
& \quad + DG(Y_m) G(Y_m) 
\Big[\int_{t_m}^{t_{m+1}} (W(r)-W(t_m)) {\rm d}W_r \Big],  
\quad Y_0:=X_0. \label{die-m}  \tag{DIEM}
\end{align} 
The proofs are analogous to the previous arguments, so we omit the details. 

\begin{rk}
\begin{enumerate}
\item 
Similar to the DIE scheme \eqref{die}, we have the following uniform moments' stability and continuity dependence estimates for \eqref{die-m}: 
there exist positive constants $\gamma_7$, $C_{\gamma_7}$, $C$, $\gamma_8$ and $\tau_0$ such that for any $X_0, x, y \in H$, $m \in \nn$, and $\tau \in (0, \tau_0)$, 
\begin{align*} 
& \ee \|Y_m\|^2 \le e^{-\gamma_7 t_m} \|X_0\|^2+C_{\gamma_7}, \\ 
& \sup_{n \in \nn_+} \frac1n \sum_{k=0}^n \ee \|Y_k\|_1^2 \le C( 1 + \|X_0\|^2), \\
& \ee \|Y_m^x-Y_m^y\|^2 \le e^{-\gamma_8 t_m} \|x-y\|^2.
\end{align*}  
In this case, there is an upper bound restriction on the time step size $\tau$.
We note that such restriction also appeared in \cite{WL19} where the authors derived numerical ergodicity of the Milstein scheme for Lipschitz SODEs, as one needs to control the last term on the right-hand side of \eqref{die-m}.
Moreover, we have the exponential mixing property of \eqref{die-m}:
for any $\tau \in (0, \tau_0)$, \eqref{die-m} (whose Markov semigroup is denoted by $\hat P_m$) possesses a unique invariant measure $\hat \pi_\tau$ in $H$ such that for any $m \in \nn$, $x \in H$, and $\phi \in {\rm Lip}_b(H)$, 
\begin{align*}
|\hat P_m \phi(x)-\hat \pi_\tau(\phi)| \le \exp(-\gamma_8 t_m/2)|\phi|_{\rm Lip}\int_H \|x-y\| \hat \pi_\tau ({\rm d} y).
\end{align*}    
 
\item
Using the method in \cite[Theorem 4.1]{Liu22} and the arguments in the proof of Theorem \ref{tm-err}, one can show a similar uniform strong error estimate for \eqref{die-m} and its Galerkin full discretization.
In particular, if $X_0\in \dot H^{1+\gamma}$ with some $\gamma\in [0,1]$, then the DIEM Galerkin scheme (the Galerkin scheme of \eqref{die-m}, denoted by $\{Y^h_k\}_{k \in \nn}$) possesses higher strong order of convergence in both spatial and temporal directions for any $p \ge 2$:
\begin{align*} 
& \sup_{k \in \nn} \ee \|X_{t_k}-Y^h_k\|^p
\le C (1+ \|X_0\|_{1+\gamma}^{p(q^2+q-1)}) (h^{1+\gamma}+\tau^\frac{1+\gamma}2)^p, 
\end{align*}  
provided more regular and growth conditions (see \cite[Assumptions 5.1 and 5.2]{LQ21}) on the coefficients of Eq. \eqref{see} hold true.  
\end{enumerate}
\end{rk}

\section{Applications to Stochastic Allen--Cahn Equation}
\label{sec4}

In the last section, let us apply the exponential ergodicity and uniform strong error analysis results, developed in Section \ref{sec3}, to the semiclassical stochastic Allen--Cahn equation \eqref{ac}. 

We note that as the driven noise in Eq. \eqref{ac} is additive, the DIE scheme \eqref{die} and DIEG scheme \eqref{die-g} coincide with the DIEM scheme \eqref{die-m} and its Galerkin full discretization, respectively. 
Therefore, in this part, it suffices to consider the DIE scheme \eqref{die} and DIEG scheme \eqref{die-g}.

Let us first check the conditions in the main Assumptions \ref{ap-f} and \ref{ap-g}  for Eq. \eqref{ac} with $f(x)=\alpha^{-2}(x-x^3)$ and $G \equiv {\rm Id}$.
At first, the growth order of $f'$ in \eqref{f'} is $q-1=2$, so that $q=3$.
Secondly, it is not difficult to show that $L_1=\alpha^{-2}$ in \eqref{f-mon} and $L_4=2\alpha^{-2}$ and $L_5=\alpha^{-2}$ in \eqref{con-f'}.
For \eqref{f-coe}, note that by Young inequality, 
\begin{align*}
\alpha^{-2}(x-x^3)x
=-\alpha^{-2} x^4+\alpha^{-2}x^2
\le -C x^2+C_\alpha,
\end{align*}     
for any positive $C$ and certain positive constant $C_\alpha$. 
So, one can take $L_2$ to be any negative scalar.
We emphasize that the choice of $L_2$ to be any negative scalar is only needed in this section to ensure the existence of the invariant measures of Eq. \eqref{ac} and its DIEG scheme \eqref{die-g}.

For the conditions on $G$ of Assumption \ref{ap-g}, $L_6=L_7=L_9=0$, $L_8={\rm Trace}({\bf Q})$, and $L_{10}=\|(-A)^{1/2} {\bf Q}^{1/2}\|^2_{HS(H;H)}$.
Therefore, $L_2+L_7/2=L_2<\lambda_1$ is always valid as one can take $L_2$ to be any negative scalar, so that the main conditions in Proposition \ref{prop-sta} and Lemma \ref{lm-sta-uk} hold and thus Eq. \eqref{ac} and its DIEG scheme \eqref{die-g} always possess at least one invariant measure, respectively.
Meanwhile, $L_1+L_6/2=\alpha^{-2}<\lambda_1$ for $\alpha>\lambda_1^{-1/2}$, so that the main conditions in Theorems \ref{tm-erg} and \ref{tm-erg-uk} hold and thus Eq. \eqref{ac} and its DIEG scheme \eqref{die-g} are both exponentially ergodic, respectively, provided that the interface thickness is not too small. 

Finally, when $\alpha>\lambda_1^{-1/2}$, we have $L_1+21 L_9/2 =\alpha^{-2} < \lambda_1$, so that the main condition in Theorem \ref{tm-err} hold.
Therefore, we have exponential ergodicity and uniform strong error estimate for the stochastic Allen--Cahn equation \eqref{ac}.

\begin{tm}  
Eq. \eqref{ac} as well as its DIE scheme \eqref{die} and DIEG scheme \eqref{die-g} always possess at least one invariant measure, respectively.
They are all exponentially ergodic, provided that $\alpha>\lambda_1^{-1/2}$.
Moreover, there exist a positive constant $C$ such that for any $\tau \in (0, 1)$,     
\begin{align*} 
& \sup_{k \in \nn} \ee \|X_{t_k}-X^h_k\|^2 
\le C (1+ \|X_0\|_1^{22}) (h^2+\tau).
\end{align*}  
\end{tm}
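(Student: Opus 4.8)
The plan is to assemble the final theorem entirely from results already established in the excerpt, after verifying that the structural hypotheses on the coefficients of Eq. \eqref{ac} translate into the abstract hypotheses of those results. First I would record the parameter identification for the Allen--Cahn nonlinearity $f(x)=\alpha^{-2}(x-x^3)$ and the additive noise $G\equiv\mathrm{Id}$: one has $q=3$ (from $|f'(\xi)|\le L_4|\xi|^2+L_5$), $L_1=\alpha^{-2}$, $L_4=2\alpha^{-2}$, $L_5=\alpha^{-2}$ in Assumption \ref{ap-f}, while $L_2$ may be taken to be any negative number because $\alpha^{-2}(x-x^3)x=-\alpha^{-2}x^4+\alpha^{-2}x^2\le -Cx^2+C_\alpha$ for arbitrary $C>0$ by Young's inequality; and for Assumption \ref{ap-g}, $L_6=L_7=L_9=0$, $L_8=\mathrm{Trace}(\mathbf Q)$, $L_{10}=\|(-A)^{1/2}\mathbf Q^{1/2}\|_{HS(H;H)}^2$, the last two being finite under the trace-class hypothesis on $\mathbf Q$ together with the additive-noise condition recalled in the Remark after Assumption \ref{ap-g}.

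Next I would check each of the three abstract conditions. For existence of an invariant measure of Eq. \eqref{ac} and of the DIEG scheme \eqref{die-g}, the relevant requirement is $L_2+L_7/2<\lambda_1$ in Proposition \ref{prop-sta} and Lemma \ref{lm-sta-uk} (together with the companion $\dot H^1$-estimate when needed); since $L_7=0$ and $L_2$ is negative, this holds unconditionally, so the uniform moment bounds \eqref{sta-u}, \eqref{sta-u-1}, \eqref{sta-uk}, \eqref{sta-uk+}, \eqref{sta-ukh} are available, and the Krylov--Bogoliubov argument in the proofs of Theorems \ref{tm-erg-uk} and \ref{tm-erg-ukh} (and the analogous one for the continuous equation) yields at least one invariant measure in each case. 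For exponential ergodicity, the condition is $(L_1+L_6/2)\vee(L_2+L_7/2)<\lambda_1$ in Theorems \ref{tm-erg}, \ref{tm-erg-uk}, \ref{tm-erg-ukh}; the second term is again harmless, and $L_1+L_6/2=\alpha^{-2}$, so the condition reduces to $\alpha^{-2}<\lambda_1$, i.e. $\alpha>\lambda_1^{-1/2}$. Under this restriction, Theorem \ref{tm-erg} gives exponential mixing of Eq. \eqref{ac}, Theorem \ref{tm-erg-uk} gives it for \eqref{die}, and Theorem \ref{tm-erg-ukh} for \eqref{die-g}; the coincidence of \eqref{die} with the DIEM scheme \eqref{die-m} and of \eqref{die-g} with its Galerkin discretization in the additive-noise case is what lets us phrase everything in terms of \eqref{die} and \eqref{die-g} only.

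For the strong error bound I would invoke Theorem \ref{tm-err}, whose hypothesis is $L_1+\tfrac{2(q^2+q-1)-1}{2}L_9<\lambda_1$. With $q=3$ we get $q^2+q-1=11$, so the coefficient of $L_9$ is $\tfrac{2\cdot 11-1}{2}=\tfrac{21}{2}$; since $L_9=0$ this is just $\alpha^{-2}<\lambda_1$ again, already guaranteed by $\alpha>\lambda_1^{-1/2}$. Plugging $q=3$ into the exponent $2(q^2+q-1)=22$ in \eqref{err} gives precisely $\sup_{k\in\nn}\ee\|X_{t_k}-X^h_k\|^2\le C(1+\|X_0\|_1^{22})(h^2+\tau)$ for any $\tau\in(0,1)$, which is the displayed estimate. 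I would also note in passing that the well-posedness side conditions $(L_1-\tfrac{\lambda_1}{\lambda_1+1})\tau<1$ and $(L_1-\lambda_1)\tau<1$ needed for \eqref{die} and \eqref{die-g} hold automatically here: when $\alpha>\lambda_1^{-1/2}$ one has $L_1=\alpha^{-2}<\lambda_1$, so $L_1-\lambda_1<0$ and $L_1-\tfrac{\lambda_1}{\lambda_1+1}<\lambda_1<1/\tau$ is comfortably satisfied for all $\tau\in(0,1)$.

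I do not expect any genuine obstacle: the proof is a bookkeeping exercise in matching constants, and the only point requiring mild care is justifying that $L_2$ may legitimately be chosen negative (it only enters the coercivity inequality \eqref{f-coe}, never the monotonicity \eqref{f-mon}, so taking $L_2$ very negative costs nothing beyond enlarging $L_3=C_\alpha$) and that this choice is simultaneously compatible with the standing convention $L_1\le L_2$ used elsewhere — which is fine because that convention was only invoked to derive \eqref{F-coe-1}, an estimate not needed for the additive-noise conclusions here, and in any case one can always revert to $L_2=L_1+\varepsilon=\alpha^{-2}+\varepsilon$ for the ergodicity statements since $\alpha^{-2}<\lambda_1$ already. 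Thus the single effective smallness condition throughout is $\alpha>\lambda_1^{-1/2}$, as claimed, and assembling Proposition \ref{prop-sta}, Lemma \ref{lm-sta-uk}, and Theorems \ref{tm-erg}, \ref{tm-erg-uk}, \ref{tm-erg-ukh}, \ref{tm-err} completes the argument.
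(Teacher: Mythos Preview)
Your proposal is correct and follows essentially the same approach as the paper: identify the constants $q=3$, $L_1=\alpha^{-2}$, $L_2<0$ (via Young's inequality), $L_6=L_7=L_9=0$, verify that $L_2+L_7/2<\lambda_1$ always holds (giving existence of invariant measures) while $L_1+L_6/2=\alpha^{-2}<\lambda_1$ and $L_1+\tfrac{21}{2}L_9=\alpha^{-2}<\lambda_1$ reduce to $\alpha>\lambda_1^{-1/2}$, and then invoke Theorems~\ref{tm-erg}, \ref{tm-erg-uk}, \ref{tm-erg-ukh}, \ref{tm-err} with $2(q^2+q-1)=22$. Your additional remarks on the well-posedness side conditions for $\tau$ and on the $L_1\le L_2$ convention are harmless extras that the paper does not spell out.
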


\begin{rk} \label{rk-solution}
The authors in \cite[Page 88]{CHS21} proposed a question whether the invariant measure of the DIE scheme \eqref{die}, applied to Eq. \eqref{see-fg} with polynomial drift of order 3 with negative leading coefficients (as in Remark \ref{rk-ex-f}), driven by additive noise, is unique.  
In this case, $L_1$ is just the bound $\sup_{\xi \in \rr} f'(x)$ and $L_6=0$.
Applying Theorem \ref{tm-erg-uk}, we conclude that, in this case, the DIE scheme \eqref{die} is exponentially ergodic and thus gives an affirmative answer to the question proposed in \cite{CHS21}, provided that  $\sup_{\xi \in \rr} f'(x)<\lambda_1$. 
We also note that this restriction can be removed in the non-degenerate noise case using the methodology developed in the recent paper \cite{LL24}.
\end{rk}

\begin{rk}
Combining the uniform ergodicity of the DIEG scheme \eqref{die-g} with the uniform weak error estimate in \cite{CHS21}, one can show the error estimate between the fully discrete invariant measure $\pi^h_\tau$ of \eqref{die-g} and the exact invariant measure $\pi$ of Eq. \eqref{ac}.
\end{rk}


\bibliographystyle{amsplain}
\bibliography{bib}

\end{document}